\title[Noncommutative LU-decomposition and Jacobi polynomials ]
{LU-decomposition of a noncommutative linear system
and Jacobi polynomials }
\author{Oscar Brega}
\address{CIEM-FaMAF, Universidad Nacional de C\'ordoba}
\email{brega@famaf.unc.edu.ar}
\author{Leandro Cagliero}
\address{CIEM-FaMAF, Universidad Nacional de C\'ordoba}
\email{cagliero@famaf.unc.edu.ar}
\date{}
\numberwithin{equation}{section}
\theoremstyle{plain}
\newtheorem{theorem} {Theorem} [section]
\newtheorem{lemma} [theorem] {Lemma}
\newtheorem{corollary} [theorem] {Corollary}
\newtheorem{proposition} [theorem] {Proposition}
\theoremstyle{definition}
\newtheorem{definition} [theorem] {Definition}
\newtheorem{remark} [theorem] {Remark}
\renewcommand \parallel {/\kern-3pt/}
\newcommand \N {\mathbb N}
\renewcommand \k { \textrm{k}}
\newcommand \lieg {\mathfrak{g}}
\newcommand \liek {\mathfrak{k}}
\newcommand \liea {\mathfrak{a}}
\newcommand \End {\operatorname{End}}
\newcommand \ad {\operatorname{ad}}
\newcommand \eq {\operatorname{eq}}
\newcommand \Eq {\operatorname{Eq}}
\begin{document}
%============================================================

\begin{abstract}
In this paper we obtain the LU-decomposition of a
noncommutative linear system of equations
that, in the rank one case, characterizes the image of the Lepowsky homomorphism
$U(\lieg)^{K}\to U(\liek)^{M}\otimes U(\liea)$.
This LU-decomposition can be transformed into very simple matrix identities,
where the entries of the matrices involved belong to a special class of Jacobi polynomials.
In particular, each entry of the L part of the original system is
expressed in terms of a single ultraspherical Jacobi polynomial.
In turns, these matrix identities yield a biorthogonality relation between
the ultraspherical Jacobi polynomials.
\end{abstract}

\maketitle

%\tableofcontents

%============================================================
\section{Introduction}
\label{sec:intro}
%============================================================

\subsection{The noncommutative linear system}
Let $\k$ be a field of characteristic zero,
let $\mathcal{A}$ be an associative, not necessarily commutative, $\k$-algebra with unit
and let $\mathcal{M}$ be a unital $\mathcal{A}$-bimodule.

This paper is devoted to perform the gaussian elimination process on the
following non commutative homogeneous system of
infinitely many linear equations and infinite unknowns
\begin{equation}\label{eq:intro, first}
\begin{array}{lllllllllllll}
  E  p_0 & \!\!\!\!+ &\!\!\!\! E  p_1  & \!\!\!\!+ &\!\!\!\! E  p_2 &  \dots& =
                    & p_{0}E   & \!\!\!\!- &\!\!\!\! p_{1} E  & \!\!\!\!+ &\!\!\!\! p_{2}E &  \dots  \\[2.4mm]
  E^2p_0 & \!\!\!\!+ &\!\!\!\! 2E^2p_1 & \!\!\!\!+ &\!\!\!\! 2^2E^2p_2 &  \dots& =
                    & p_{0}E^2 & \!\!\!\!- &\!\!\!\! 2p_{1} E^2& \!\!\!\!+ &\!\!\!\! 2^2p_{2}E^2&  \dots \\[2.4mm]
  E^3p_0 & \!\!\!\!+ &\!\!\!\! 3E^3p_1 & \!\!\!\!+ &\!\!\!\! 3^2E^3p_2 &  \dots& =
                    & p_{0}E^3 & \!\!\!\!- &\!\!\!\! 3p_{1} E^3& \!\!\!\!+ &\!\!\!\! 3^2p_{2}E^3&  \dots \\[2mm]
\vdots && \vdots && \vdots &&& \vdots&& \vdots&& \vdots
\end{array},
\end{equation}
where $E$ is a given element of $\mathcal{A}$ and
$p_0,p_1,p_2,\dots$ belong to $\mathcal{M}$ and are the unknowns.

It is clear that $(p_0,p_1,\dots,p_d,0,\dots)$ is a solution of the system \eqref{eq:intro, first}
if and only if the polynomial $p=p_0+p_1t+p_2t^2+\dots+p_dt^d$, belonging to $\mathcal{M}[t]:=\mathcal{M}\otimes\k[t]$,
satisfies the equations
\begin{equation}\label{eq:intro B with H=0}
E^np(n)=p(-n)E^n  \quad  \text{for all} \,\, n\in\N.
\end{equation}

This system
can not be expressed as a single matrix
equation $AX=0$ with $A$ a matrix with coefficients in $\mathcal{A}$.
In fact, non commutative systems of (homogeneous) linear equations
can seldom be expressed as a single matrix equation with coefficients in $\mathcal{A}$.
Even when it is possible, only in
exceptional cases a gaussian elimination process can be performed
in a satisfactory way. For instance, if
$\dim_{\k}\mathcal{A}<\infty$ and $\dim_{\k}\mathcal{M}<\infty$,
by using bases of $\mathcal{A}$ and $\mathcal{M}$,
one can express every finite system of linear equations as a
single matrix equation $AX=B$ with coefficients in $\k$.
However the size of $A$ might be so big that this approach does
not help very much.
Some papers dealing with this subject are \cite{C}, \cite{CS}, \cite{Or}
or \cite{GGRW}.

On the other hand, by using the left and right
regular actions $L,R:\mathcal{A}\to\End_\k(\mathcal{M})$ of
$\mathcal{A}$ on $\mathcal{M}$,
it is indeed possible to express every  non commutative system of linear
equations as a single matrix equation but with coefficients in
$\End_\k(\mathcal{M})$.
In our case, the system \eqref{eq:intro, first}
can be expressed as single matrix equation $M_0X=0$, where
$X=\left(
\begin{smallmatrix}
p_0       \\[.7mm]
p_1       \\[.7mm]
p_2       \\[-.5mm]
\vdots
\end{smallmatrix}
\right)$
and
\begin{equation*}
M_0= { \left(
\begin{smallmatrix}
L_E   - R_E   \;&\; L_E    + R_E    \;&\; L_E    -  R_E   \;&\;  L_E   +   R_E   & .\;.\;.  \\[3mm]
L_E^2 - R_E^2 \;&\; 2L_E^2 + 2R_E^2 \;&\; 4L_E^2 - 4R_E^2 \;&\; 8L_E^2 +  8R_E^2 & .\;.\;.  \\[3mm]
L_E^3 - R_E^3 \;&\; 3L_E^3 + 3R_E^2 \;&\; 9L_E^3 - 9R_E^2 \;&\;27L_E^3 + 27R_E^2 & .\;.\;.  \\[0mm]
\vdots    & \vdots   & \vdots   & \vdots
\end{smallmatrix}
\right). }
\end{equation*}
In order to have an insight into the LU-decomposition of this
system assume for a moment that the unknown polynomial
has degree 2.
In this case it is easy to see that
\begin{multline*}
\left(
\begin{smallmatrix}
L_E   - R_E   \;&\; L_E    + R_E    \;&\; L_E    -  R_E    \\[3mm]
L_E^2 - R_E^2 \;&\; 2L_E^2 + 2R_E^2 \;&\; 4L_E^2 - 4R_E^2  \\[3mm]
L_E^3 - R_E^3 \;&\; 3L_E^3 + 3R_E^2 \;&\; 9L_E^3 - 9R_E^2
\end{smallmatrix}
\right) \\
=
\left(
\begin{smallmatrix}
1&0&0             \\[2mm]
L_{{E}}+R_{{E}}&1&0\\[2mm]
L_{{E}}^{2}+R_{{E}}L_{{E}}+R_{{E}}^{2}&2\,L_{{E}}+2\,R_{{E}}&1
\end{smallmatrix}
\right)
\left(
\begin{smallmatrix}
L_{{E}}-R_{{E}}&L_{{E}}+R_{{E}}&L_{{E}}-R_{{E}}\\[2mm]
0& \left( L_{{E}}-R_{{E}} \right) ^{2}&3\,L_{{E}}^{2}-3\,R_{{E}}^{2}\\[2mm]
0&0&2\, \left( L_{{E}}-R_{{E}} \right) ^{3}
\end{smallmatrix}
\right).
\end{multline*}
is the LU-decomposition of the system \eqref{eq:intro, first}.
The goal of this paper is to obtain a transparent expression
for the LU-decomposition of the following more general system
\begin{equation}\label{eq:B}
E^np(H+n)=p(H-n)E^n  \quad  \text{for all} \,\, n\in\N,
\end{equation}
for some $H\in\mathcal{A}$.

%----------------------------------------------------------------------------------------------
\subsection{Relevance of the system (\ref{eq:B})}\label{subsec.intr-motiv}
%----------------------------------------------------------------------------------------------
The interest on the system \eqref{eq:B} comes from the fact that their
solution set is closely related to
invariant spaces under group actions.
This is clear in the particular case in which $H=0$ and $E$ is the identity of
$\mathcal{A}$, since the solutions of \eqref{eq:B} are the even polynomials.
It is not difficult to see that in other more general situations
there exists a group $G_E$ associated to $E$,
acting on $\mathcal{M}\otimes_\k\k[t]$,
such that
\[
\{p\in\mathcal{M}\otimes_\k\k[t]:\,E^np(n)=p(-n)E^n  \text{ for all }  n\in\N\}=
\big(\mathcal{M}\otimes_\k\k[t]\big)^{G_E}.
\]

A second, and more relevant, example of the relationship between
the system \eqref{eq:B} and invariants of groups
appears in a problem from representation theory of Lie groups.
Let $G_o$ be a connected, noncompact, real semisimple Lie group with finite
center, let $\lieg$ be the complexification of the Lie algebra of $G_o$
and let $K_o$ denote a maximal
compact subgroup of $G_o$.
By the fundamental work of Harish-Chandra it is known that many deep
questions concerning the infinite dimensional
representation theory of $G_o$ reduce to  questions about the structure
of the algebra $U(\lieg)^{K_o}$, the $K_o$-invariants
 in the universal enveloping algebra of $\lieg$.
In \cite{T}, Tirao proved that the elements of the image of $U(\lieg)^{K_o}$
by the Lepowsky homomorphism (see \cite{L}) satisfy a system of linear equations
completely analogous to system \eqref{eq:B}.
Moreover, for rank one classical Lie groups it is proved in \cite{BCT} that this
image coincides with the solution set of the system introduced by Tirao.
In order to prove this result,
the LU-decomposition obtained in this paper constitute a very useful tool.
These connections with groups invariants
 are not treated in this paper.

%----------------------------------------------------------------------------------------------
\subsection{Main results}
%----------------------------------------------------------------------------------------------

In \S\ref{sec:gausselimin} we
find a periodic sequence of elementary row operations that,
when applied to the system \eqref{eq:B},
transforms it into a triangular system
that involves the adjoint action of $E$
and some combinatoric sums
of discrete derivatives of the polynomial $p$.
The main result of this section is Theorem \ref{thm:Eqinfinity}.
As a corollary we obtain that
if $p\in{\mathcal{M}}[t]$ satisfies
\begin{equation*}
E^np(H+n)=p(H-n)E^n  \quad  \text{for } \,\, 0\le n\le\deg(p),
\end{equation*}
then $p$ satisfies \eqref{eq:B}.

%\subsection{Combinatorial aspects}\label{subsec:combinatorial aspects}
Note that $M_0$ is an infinite matrix with coefficients in $\End_{\k}(\mathcal{M})$
and it is closely related to the matrix
\[
\tilde M_0=\left(
\begin{smallmatrix}
x   - 1   \;&\; x    + 1    \;&\; x    -  1   \;&\;  x   +   1   &\;\; \cdot  \\[3mm]
x^2 - 1   \;&\; 2x^2 + 2    \;&\; 4x^2 - 4    \;&\; 8x^2 +  8    &\;\; \cdot  \\[3mm]
x^3 - 1   \;&\; 3x^3 + 3    \;&\; 9x^3 - 9    \;&\;27x^3 + 27    &\;\; \cdot  \\[3mm]
.           & .         & .         & .
\end{smallmatrix}
\right)\in\k[x].
\]
Taking into account the relationship between $M_0$ and $\tilde M_0$,
the results obtained in Theorem \ref{thm:Eqinfinity}
can be translated in terms of
polynomials in one variable with coefficients in $\k$.
The combinatorial aspects of Theorem \ref{thm:Eqinfinity}
are transformed into very simple matrix identities,
where the entries of the matrices involved are a special class of Jacobi polynomials.
In turns, these matrix identities yield biorthogonality relations between
the ultraspherical polynomials.
It has recently appeared in the literature some other matrix identities (in particular LU decompositions)
involving Jacobi polynomials that translate
sophisticated polynomials identities into very simple matrix identities
(see for instance \cite{KO}).

These results are described in what follows.

\medskip

Let $P^{\alpha,\beta}_n$ denote the Jacobi polynomial of degree $n$
associated to the numbers $\alpha$ and $\beta$.
Usually $\alpha$ and $\beta$ are complex numbers but, since we are working in the context of
an arbitrary field of characteristic zero, they are assumed to be rational.
Recall that the polynomials corresponding to parameters $\alpha=\beta$
are known as the \emph{ultraspherical} or \emph{Gegenbauer's} polynomials.
Let
\[
p_n^{\alpha,\beta}(x)=(x-1)^nP^{\alpha,\beta}_n\left(\tfrac{x+1}{x-1}\right).
\]
It is clear that $p_n^{\alpha,\beta}(x)$ is again a polynomial
and it can be expressed in terms of the hypergeometric
function of Gauss as follows (see (4.22.1) in \cite{Sz}),
\[
p_n^{\alpha,\beta}(x)=
\tfrac{(n+\alpha+\beta+1)\dots(2n+\alpha+\beta)}{n!}\;
{}_2F_1\left(-n,-n-\alpha;-2n-\alpha-\beta;x-1\right).
\]
Our results in terms of $\tilde M_0$ are the following:
\begin{enumerate}[(1)]
  \item
The LU-decomposition of $\tilde M_0$ is
\[
\tilde M_0=\tilde L_0 \tilde U_0
\left(
\begin{smallmatrix}
1     &         &         &         &         &         \\
      & 1       &         &         &         &          \\
      &         & 2       &         &         &           \\
      &         &         &6        &         &           \\
      &         &         &         & 24      &             \\
      &         &         &         &         & \cdot
\end{smallmatrix}
\right)
{\tiny
\left(
\begin{array}{rrrrrr}
1       &-1      & 1    &-1    & 1      & \cdot      \\[1mm]
        & 1      &-3    & 7    &-15     & \cdot      \\[1mm]
        &        & 1    &-6    & 25     & \cdot      \\[1mm]
        &        &      & 1    &-10     & \cdot      \\[1mm]
        &        &      &      & 1      & \cdot      \\[1mm]
.       & .      & .    & .    & .      \\
\end{array}
\right),
}
\]
where
\[
\begin{array}{rclr}
\displaystyle (\tilde L_0)_{ij}&=&\displaystyle (-1)^{i-j}\;p_{i-j}^{-i,-i}(x), &i\ge j; \\[3mm]
\displaystyle (\tilde U_0)_{ij}&=&\displaystyle (-1)^{i-j}\;\frac{j}{i}\;(x-1)^{2i-j}\;p_{j-i}^{-j,-1}(x), &i\le j;\\[2mm]
\end{array}
\]
and the explicit numerical matrices are respectively formed by the factorial and
(up to the minus signs) Stirling numbers of the second kind.

\item The inverse of the matrix $\tilde L_0$ is
\[
(\tilde L_0^{-1})_{ij}=(-1)^{i-j}\;\frac{j}{i}\;p_{i-j}^{j,j}(x),\quad i\ge j.
\]
This yields the following ``discrete orthogonality'' relationship that
involves once many of the ultraspherical  Jacobi polynomials
with integer parameters
\[
\left(
\begin{smallmatrix}
\frac11 P_0^{1,1} &                  &                 &                  & \cdot   \\[1mm]
\frac12 P_1^{1,1} &\frac22 P_0^{2,2} &                 &                  & \cdot   \\[1mm]
\frac13 P_2^{1,1} &\frac23 P_1^{2,2} &\frac33 P_0^{3,3}&                  & \cdot   \\[1mm]
\frac14 P_3^{1,1} &\frac24 P_2^{2,2} &\frac34 P_1^{3,3}&\frac11 P_0^{4,4} & \cdot   \\[1mm]
.                 & .                & .               & .
\end{smallmatrix}
\right)
\left(
\begin{smallmatrix}
P_0^{-1,-1} &             &            &             & \cdot   \\[1.23mm]
P_1^{-2,-2} & P_0^{-2,-2} &            &             & \cdot   \\[1.23mm]
P_2^{-3,-3} & P_1^{-3,-3} & P_0^{-3,-3}&             & \cdot   \\[1.23mm]
P_3^{-4,-4} & P_2^{-4,-4} & P_1^{-4,-4}& P_0^{-4,-4} & \cdot   \\[1.23mm]
.           & .           & .          & .
\end{smallmatrix}
\right)=
\left(
\begin{smallmatrix}
1\:\: &    \:\:&  \:\:   &  \:\: & \cdot    \\[2.20mm]
 \:\: & 1  \:\:&  \:\:   &  \:\: &  \cdot   \\[2.20mm]
 \:\: &    \:\:& 1\:\:   &  \:\: &  \cdot   \\[2.20mm]
 \:\: &    \:\:&  \:\:   & 1\:\: &  \cdot   \\[2.20mm]
.\:\: & .  \:\:& .\:\:   & .
\end{smallmatrix}
\right).
\]

\item
We mentioned above that we performed to the system \eqref{eq:B}
a gaussian elimination process in which we periodically apply the same sequence of
elementary row operations.
This periodicity is translated in the following terms.
Given an infinite matrix
\[
T=\left(
\begin{smallmatrix}
a_{11} & a_{12} & a_{13} & \cdot\;\; \\[1.5mm]
a_{21} & a_{22} & a_{23} & \cdot\;\; \\[1.5mm]
a_{31} & a_{32} & a_{33} & \cdot\;\; \\
. & . & . &
\end{smallmatrix}
\right),
\]
let $s(T)$
be the \emph{shifted matrix} of $T$, that is $s(T)$ is
the diagonal blocked matrix
$
\left(
  \begin{smallmatrix}
1     &    \\
          & T
\end{smallmatrix}
\right)
$
formed by the identity
matrix of size $1\times 1$ and $T$.
We say that an infinite matrix $A$
admits a \emph{periodic gaussian elimination process} if there exist
an lower triangular matrix with 1's in the diagonal $T_0$ such that
the sequence of matrices
\[
A,\quad T_0A,\quad s(T_0)T_0A,\quad s^2(T_0)s(T_0)T_0A, \quad\cdots \]
converges\footnote{The concept of convergence is used in the discrete sense, that is
a sequence of matrices $B_k$ converges to $B$ if for
every $i$, $j$ there exists $k_0$ such that
$(B_k)_{ij}=B_{ij}$ for all $k\ge k_0$.}
 to an  upper triangular matrix.
In this case $T_0$ is called a \emph{fundamental sequence}
of the Gaussian elimination process of $A$.
It is not difficult to see that an infinite matrix $A$
admits a periodic gaussian elimination process if and only if
$A$ admits an LU-decomposition.
In fact, if $A=LU$ then the fundamental period is $T_0=s(L)L^{-1}$ and,
conversely, if $T_0$ is the fundamental period of $A$ then
\[L^{-1}=\cdots s^2(T_0)s(T_0)T_0\quad\text{ and }\quad
L=T_0^{-1}s(T_0^{-1})s^2(T_0^{-1})\cdots.\]
It is worth mentioning at this point that
the Vandermonde matrix $V_{ij}=i^{j-1}$ admits a
periodic gaussian elimination process and its fundamental sequence is
$\left(
\begin{smallmatrix}
\;\;\,1      &      &      &      & \cdot      \\
-1     &\;\;\,1    &      &      & \cdot      \\
       &-1    &\;\;\,1    &&       \cdot      \\
       &       &-1    &\;\;\,1    & \cdot      \\
\;\;\,.    & \;\;\,.      & \;\;\,.    &
\end{smallmatrix}
\right)$
and that the recursive definition of the Pascal numbers is equivalent to
the identity
\[
\left(
\begin{smallmatrix}
1\;     &        &        &        & \cdot\;\;      \\[1mm]
1\;     & 1\;    &        &        & \cdot\;\;      \\[1mm]
1\;     & 2\;    & 1\;    &        & \cdot\;\;      \\[1mm]
1\;     & 3\;    & 3\;    & 1\;    & \cdot\;\;      \\[1mm]
.\;     & .\;    & .\;    & .\;  \\
\end{smallmatrix}
\right)=\cdots s^2(T_0)s(T_0)T_0,\quad\text{with }\;
T_0=
\left(
\begin{smallmatrix}
1      &     &     &      & \cdot      \\
1      &1    &     &      & \cdot      \\
       &1    &1    &      & \cdot      \\
       &     &1    &1    & \cdot      \\
.      &.    &.    &.    &
\end{smallmatrix}
\right).
\]

It turns out that $\tilde M_0$ admits a
periodic gaussian elimination process and we show that its fundamental sequence is
\[
T_0=
\left(
\begin{smallmatrix}
1       &        &         &          & \cdot      \\
-x-1    &1       &         &          & \cdot      \\
x       &-x-1    &1        &          & \cdot      \\
        &x       &-x-1    &1          & \cdot      \\
.       & .      & .      & .
\end{smallmatrix}
\right).
\]
It is not difficult to see that the inverse of $T_0$ is the matrix formed by the cyclotomic polynomials
\[
T_0^{-1}=
\left(
\begin{smallmatrix}
1          &        &         &          & \cdot      \\[1mm]
1+x        &1       &         &          & \cdot      \\[1mm]
1+x+x^2    &1+x     &1        &          & \cdot      \\[1mm]
1+x+x^2+x^3&1+x+x^2 &1+x      &1         & \cdot      \\[1mm]
.          & .      & .       & .
\end{smallmatrix}
\right),
\]
and therefore the ultraspherical polynomials  can expressed as
\begin{multline*}
\left(
\begin{smallmatrix}
\frac11 p_0^{1,1} &                  &                 &                  & \cdot   \\[1mm]
\frac12 p_1^{1,1} &\frac22 p_0^{2,2} &                 &                  & \cdot   \\[1mm]
\frac13 p_2^{1,1} &\frac23 p_1^{2,2} &\frac33 p_0^{3,3}&                  & \cdot   \\[1mm]
\frac14 p_3^{1,1} &\frac24 p_2^{2,2} &\frac34 p_1^{3,3}&\frac11 p_0^{4,4} & \cdot   \\[1mm]
.                 & .                & .               & .
\end{smallmatrix}
\right)\\
=
\cdots
\left(
\begin{smallmatrix}
1       &        &         &          & \cdot      \\[1.5mm]
        &\;\;1   &         &          & \cdot      \\[1.5mm]
        &        &1        &          & \cdot      \\[1.5mm]
        &        &-x-1     &1         & \cdot      \\
.       & .      & .       & .
\end{smallmatrix}
\right)
\left(
\begin{smallmatrix}
1       &        &         &          & \cdot      \\[1.5mm]
        &1       &         &          & \cdot      \\[1.5mm]
        &-x-1    &1        &          & \cdot      \\[1.5mm]
        &x       &-x-1     &1         & \cdot      \\
.       & .      & .       & .
\end{smallmatrix}
\right)
\left(
\begin{smallmatrix}
1       &        &         &          & \cdot      \\[1.5mm]
-x-1    &1       &         &          & \cdot      \\[1.5mm]
x       &-x-1    &1        &          & \cdot      \\[1.5mm]
        &x       &-x-1     &1         & \cdot      \\
.       & .      & .       & .
\end{smallmatrix}
\right)
\end{multline*}
and
\begin{multline*}
\left(
\begin{smallmatrix}
p_0^{-1,-1} &             &            &             & \cdot   \\[1.23mm]
p_1^{-2,-2} & p_0^{-2,-2} &            &             & \cdot   \\[1.23mm]
p_2^{-3,-3} & p_1^{-3,-3} & p_0^{-3,-3}&             & \cdot   \\[1.23mm]
p_3^{-4,-4} & p_2^{-4,-4} & p_1^{-4,-4}& p_0^{-4,-4} & \cdot   \\[1.23mm]
.           & .           & .          & .
\end{smallmatrix}
\right) \\
=
\left(
\begin{smallmatrix}
1          &        &         &          & \cdot      \\[1mm]
1+x        &1       &         &          & \cdot      \\[1mm]
1+x+x^2    &1+x     &1        &          & \cdot      \\[1mm]
1+x+x^2+x^3&1+x+x^2 &1+x      &1         & \cdot      \\[1mm]
.          & .      & .       & .
\end{smallmatrix}
\right)
\left(
\begin{smallmatrix}
1\;\;      &        &         &          & \cdot      \\[1.5mm]
           &1       &         &          & \cdot      \\[1mm]
           &1+x     &1        &          & \cdot      \\[1mm]
           &1+x+x^2 &1+x      &1         & \cdot      \\[1mm]
.          & .      & .       & .
\end{smallmatrix}
\right)
\left(
\begin{smallmatrix}
1\;\;      &        &         &          & \cdot      \\[1.5mm]
           &1\;\;   &         &          & \cdot      \\[1.5mm]
           &        &1        &          & \cdot      \\[1mm]
           &        &1+x      &1         & \cdot      \\[1mm]
.          & .      & .       & .
\end{smallmatrix}
\right)
\cdots
\end{multline*}
We notice that in these two identities the polynomials involved are
$p_n^{\alpha,\alpha}$ instead of $P_n^{\alpha,\alpha}$.
\end{enumerate}

%===========================================================================
\section{The gaussian elimination for the system \eqref{eq:B}}\label{sec:gausselimin}
%===========================================================================

Let $\k$ be a field of characteristic zero,
let $\mathcal{A}$ be an associative, not necessarily commutative, $\k$-algebra with unit and
let $\mathcal{M}$ be a unital $\mathcal{A}$-bimodule.
Given an element $r\in\mathcal{A}$, let $L_r,R_r\in\End_{\k}(\mathcal{M})$
respectively denote the left and right actions
by $r$, and let $\ad_r=L_r-R_r$ be the adjoint action of $r$ in $\mathcal{M}$.

Recall that $\mathcal{M}[t]=\mathcal{M}\otimes_\k\k[t]$, and that given an element
$r\in\mathcal{A}$ one has the evaluation map
$\mathcal{M}[t]\to\mathcal{M}$ defined using the right action of $\mathcal{A}$ on $\mathcal{M}$
by $a\otimes p(t)\mapsto ap(r)$.
If $r\in\mathcal{A}$ and $p\in\mathcal{M}[t]$ then the evaluation of $p$ in $r$
is given by $p(r)=p_0+p_1 r+\dots+p_n r^n$.

Given two arbitrary elements $E$ and $H$ in $\mathcal{A}$ we are
interested in the set of polynomials $p\in{\mathcal{M}}[t]$ that
satisfy the following system of linear equations \eqref{eq:B},
that is polynomials $p\in{\mathcal{M}}[t]$ satisfying
\begin{equation*}
E^np(H+n)=p(H-n)E^n\quad \text{for all }n\in\N.
\end{equation*}
In this section, we assume that $E$ is an eigenvector of $\ad_H$ in $\mathcal{A}$
with eigenvalue $c$ and we shall perform a gaussian elimination process
to obtain, in Theorem \ref{thm:Eqinfinity},
a triangular linear system in $\mathcal{M}[t]$ equivalent to the system \eqref{eq:B}.

%------------------------------------------------------------
\subsection{A first look at the system}
%------------------------------------------------------------

We first discuss the linear system \eqref{eq:B} for polynomials of degree zero and one.

\vspace{1mm}

\noindent \emph{Degree zero:} Let $p=p_0\in{\mathcal{M}}[t]$ be a
constant polynomial. Observe that $p$ satisfies the first equation
of the system if and only if $p_0$ commutes with $E$, and therefore
$p$ satisfies all the other equations. Thus $p$ satisfies the system
\eqref{eq:B} if and only if $p_0\in\ker\ad_E$.

\vspace{1mm}

\noindent
\emph{Degree one:}  A linear polynomial
$p=p_0+p_1t\in{\mathcal{M}}[t]$ satisfies the system \eqref{eq:B} if and only if
the vector $(p_0,p_1)$ is a solution of the following system,
\begin{eqnarray}
  E  p_0\;   + E  p_1(H\!+\!1)\;\,     &= & p_{0}E\;\;  + p_{1}(H\!-\!1) E\;\, \label{1} \\
  E^2  p_0 + E^2  p_1(H\!+\!2) &=& p_{0}E^2   + p_{1}(H\!-\!2) E^2 \label{2}  \\
  E^3  p_0 + E^3  p_1(H\!+\!3)  &=& p_{0}E^3   + p_{1}(H\!-\!3) E^3 \label{3} \\
 \vdots\hspace{1cm} && \hspace{1cm}\vdots\nonumber
\end{eqnarray}
It is not difficult to see that equation \eqref{3} can be obtained
from the first two as follows,
\[
2E\times(\ref{2})-E^2\times(\ref{1})-E\times(\ref{1})\times E
-(\ref{1})\times E^2+(\ref{2})\times E.
\]
In fact, we shall see in Corollary \ref{coro:first N} that for all $n\ge3$,
the $n\text{th}$ equation can be obtained from equations \eqref{1} and \eqref{2}. Then,
in this case, the system \eqref{eq:B} is equivalent to the $2\times2$
system formed by equations \eqref{1} and \eqref{2}. This $2\times2$
system is represented by the matrix
$$M=\begin{pmatrix}
\scriptstyle L_E-R_E & \scriptstyle L_ER_{H+1}-R_ER_{H-1} \\[2mm]
\scriptstyle L_E^2-R_E^2 & \scriptstyle L_E^2R_{H+2}-R_E^2R_{H-2}
\end{pmatrix},$$
with coefficients in $\End_{\k}(\mathcal{M})$. Now, since
\[
\begin{pmatrix}
\scriptstyle 1 & \scriptstyle 0 \\[2mm]
\scriptstyle -(L_E+R_E) & \scriptstyle 1
\end{pmatrix}
\begin{pmatrix}
\scriptstyle L_E-R_E & \scriptstyle L_ER_{H+1}-R_ER_{H-1} \\[2mm]
\scriptstyle L_E^2-R_E^2 & \scriptstyle L_E^2R_{H+2}-R_E^2R_{H-2}
\end{pmatrix}
=
\begin{pmatrix}
\scriptstyle  L_E-R_E & \scriptstyle L_ER_{H+1}-R_ER_{H-1} \\[2mm]
\scriptstyle 0& \scriptstyle (L_E-R_E)^2
\end{pmatrix},
\]
we obtain that equations \eqref{1} and \eqref{2},
are equivalent to the triangular system
$U\left(
\begin{smallmatrix}
 b_0 \\
 b_1
\end{smallmatrix}\right)=0,
$
with
\[
U=
\left(
\begin{smallmatrix}
 L_E-R_E & L_ER_{H+1}-R_ER_{H-1} \\[2mm]
0& (L_E-R_E)^2
\end{smallmatrix}\right)=
\left(
\begin{smallmatrix}
 L_E-R_E & L_E+R_E \\[2mm]
0& (L_E-R_E)^2
\end{smallmatrix}\right)
\left(
\begin{smallmatrix}
 1 & R_{H} \\[3mm]
0& 1
\end{smallmatrix}\right).
\]

In Theorem \ref{thm:Eqinfinity} we shall extend this triangularization process
to polynomials of arbitrary degree. In particular, we shall obtain in
Corollary \ref{coro:first N} that for polynomials of degree $N$ the whole
system \eqref{eq:B} is equivalent to the first $N+1$ equations of the
system \eqref{eq:B}.

%------------------------------------------------------------
\subsection{Some additional notation}\label{subsec:notation}
%------------------------------------------------------------
For $h\in\k$, $h\ne0$, we define the $h$-discrete derivative
$\partial_h:\mathcal{M}[t]\to\mathcal{M}[t]$ by
\[
\partial_hp(t)=\frac{p(t)-p(t-h)}{h}, \quad\text{ for }p\in\mathcal{M}[t].
\]
It is clear that
\[
\partial_h^np(t)=\frac1{h^n}\sum^n_{j=0} (-1)^j  {\binom n j}p(t-jh),
\]
and, if $p=p_mt^m+\cdots+p_0$, one has
$$
\partial_h^np(t)=\begin{cases}
0,&\text{if $n>m$}\\ m! p_m ,&\text{if $n=m$}.
\end{cases}
$$

If $(t)_n$ is the Pochhammer polynomial of degree $n$, that is
\[
(t)_{0}=1 \quad \text{ and }\quad (t)_n=t(t+1)\dots(t+n-1) \quad \text{ for } n\ge 1,
\]
it is easy to see that $\partial_h (t/h)_n=n(t/h)_{n-1}$ and thus
\begin{align}
\partial_h^r (t/h)_n&=r!\binom{n}{r}(t/h)_{n-r} \notag \\ \label{eq:Pochhammer}
&=(n-r+1)_{r}(t/h)_{n-r}\quad\text{ for } r\le n.
\end{align}

Given $r\in\mathcal{A}$ we extend to $\mathcal{M}[t]$
the adjoint action of $r$ in $\mathcal{M}$ by
\[
\ad_r(p_0+p_1 t+\dots+p_n t^n)=\ad_r(p_0)+\ad_r(p_1) t+\dots+\ad_r(p_n) t^n.
\]
Observe that $\ad_r$ commute with the discrete derivative
$\partial_h$ for all $h\in\k$.
In addition, if $E$, $H\in\mathcal{A}$ are elements that satisfy
$\ad_H(E)=cE$, $c\in\k$, then it is straightforward to prove that
\begin{align}
\label{eq:id1}
\ad_E\big(p(t+H)\big)&=\ad_E(p)(t+H)-c\,\partial_{c}p(t+H)E \\
\label{eq:id2}
E^np(t+H)&=\sum_{j=0}^n\binom nj\ad_E^{n-j}(p)(t-cj+H)E^j
\text{ for all $n\in\N$} \\
\label{eq:id3}
E^nH^m&=(H-nc)^mE^n\text{ for all $m,n\in\N$}.
\end{align}

%------------------------------------------------------------
\subsection{The gaussian elimination}\label{subsec:def eq}
%------------------------------------------------------------
For $k\ge0$ and $n\ge k$ we introduce the following equation,
\begin{multline*}
\eq_n^k:\sum_{i=0}^k\sum_{j=0}^{n-k}(-1)^i \tbinom{k}{i}\tbinom{n-k}{j}\,
\ad_E^{n-(j+i)}\partial_1^k\partial_{c-1}^i\;p(H+n-cj-i)E^{j+i} \\
=\sum_{i=0}^k(-1)^i\tbinom{k}{i}\,
\ad_E^{k-i}\partial_1^k\partial_{c-1}^i\;p(H-n+2k-i)E^{n-(k-i)}.
\end{multline*}

We now collect some properties of these equations.
\begin{enumerate}[(1)]
\item The equation $\eq_n^0$ is,\,
$\textstyle
\sum_{j=0}^{n}\tbinom{n}{j}\,\ad_E^{n-j}p(H+n-cj)E^{j} =
p(H-n)E^{n}.$ Hence, in view of the identity \eqref{eq:id2}, the
system $\{\eq_n^0:n\in \mathbb{N}\}$ is the original system \eqref{eq:B}.

\item On the other hand, the equation
$\eq_n^n$ is
\begin{multline*}
\sum_{i=0}^n(-1)^i \tbinom{n}{i}\,
\ad_E^{n-i}\partial_1^n\partial_{c-1}^i\;p(H+n-i)E^{i} \\
=\sum_{i=0}^n(-1)^i\tbinom{n}{i}\,
\ad_E^{n-i}\partial_1^n\partial_{c-1}^i\;p(H-n+2n-i)E^{i},
\end{multline*}
which is trivial for all $n\in \mathbb{N}$.

\item The fundamental property of $\eq_n^k$, for $k\ge0$ and $n\ge k$,
is that the polynomial $p$ appears under a derivative of order
grater than or equal to $k$.
This implies that for all $n\ge k$ the equation
$\eq_n^k$ involves only the coefficients $p_j$
with $j\ge k$.
\end{enumerate}

For $k\ge0$, let $\Eq^k$ be the system
\[
\Eq^k=\{\eq_1^0,\eq_2^1,\dots,\eq_{k}^{k-1}\}\cup\{\eq_n^{k}:n>k\}.
\]
As we have just observed, $\Eq^0$ is the original system \eqref{eq:B}.
The following picture may help to keep these definitions in mind.

\begin{center}
\setlength{\unitlength}{24pt}
\begin{picture}(9,8)(0,1)
  \put(0.8,8.6){$\scriptstyle k=0$}
  \put(1.8,8.6){$\scriptstyle k=1$}
  \put(2.8,8.6){$\scriptstyle k=2$}
  \put(3.8,8.6){$\scriptstyle k=3$}
  \put(4.8,8.6){$\scriptstyle k=4$}
  \put(5.8,8.6){$\scriptstyle k=5$}
  \put(0,8){$\scriptstyle n=0$}
  \put(0,7){$\scriptstyle n=1$}
  \put(0,6){$\scriptstyle n=2$}
  \put(0,5){$\scriptstyle n=3$}
  \put(0,4){$\scriptstyle n=4$}
  \put(0,3){$\scriptstyle n=5$}
  \put(1.1,1.4){\vector(0,1){.3}}
  \put(1,1){$ \Eq^0$}
  \put(4.1,1.4){\vector(0,1){.3}}
  \put(4,1){$ \Eq^3$}
  \put(7.0,2.0){\vector(-1,1){.5}}
  \put(7.2,1.5){\Small  are trivial}
  \put(7.2,2){\Small  equations $\eq_n^n$}
  \put(1,8){$\scriptstyle \circ$}
  \put(1,7){$\scriptstyle \bullet$}
  \put(.90,7){$\scriptstyle \bigoplus$}
  \put(1,6){$\scriptstyle \bullet$}
  \put(1,5){$\scriptstyle \bullet$}
  \put(1,4){$\scriptstyle \bullet$}
  \put(1,3){$\scriptstyle \bullet$}
%  \put(1,2){$\scriptstyle \bullet$}
  \put(1,2){$\scriptstyle \vdots$}
  \put(2,7){$\scriptstyle \circ$}
  \put(1.9,6){$\scriptstyle \bigoplus$}
  \put(2,5){$\scriptstyle \cdot$}
  \put(2,4){$\scriptstyle \cdot$}
  \put(2,3){$\scriptstyle \cdot$}
%  \put(2,2){$\scriptstyle \cdot$}
  \put(2,2){$\scriptstyle \vdots$}
  \put(3,6){$\scriptstyle \circ$}
  \put(2.9,5){$\scriptstyle \bigoplus$}
  \put(3,4){$\scriptstyle \cdot$}
  \put(3,3){$\scriptstyle \cdot$}
%  \put(3,2){$\scriptstyle \cdot$}
  \put(3,2){$\scriptstyle \vdots$}
  \put(4,5){$\scriptstyle \circ$}
  \put(3.9,4){$\scriptstyle \bigoplus$}
  \put(3.9,3){$\scriptstyle \bigoplus$}
%  \put(3.9,2){$\scriptstyle \bigoplus$}
  \put(4,2){$\scriptstyle \vdots$}
  \put(5,4){$\scriptstyle \circ$}
  \put(5,3){$\scriptstyle \cdot$}
%  \put(5,2){$\scriptstyle \cdot$}
  \put(5,2){$\scriptstyle \vdots$}
  \put(6,3){$\scriptstyle \circ$}
%  \put(6,2){$\scriptstyle \cdot$}
  \put(6,2){$\scriptstyle \vdots$}
\end{picture}
\end{center}

Finally, let $\Eq^\infty$ be the system
\[
\Eq^\infty=\{\eq_{n+1}^n:n\ge0\}.
\]
According to item (3) above, $\Eq^\infty$ is an upper triangular
system, in the sense that if $p=\sum_{j\ge0} p_j\,t^j$ then
$\eq_{n+1}^n$ only involves the unknowns $p_j$ with $j\ge n$. The
equations of this system are the following,
\begin{multline*}
\eq_{n+1}^n:\sum_{i=0}^{n}(-1)^i \tbinom{n}{i}\,
\ad_E^{n+1-i}\partial_1^{n}\partial_{c-1}^i\;p(H+n+1-i)E^{i} \\
+\sum_{i=0}^{n}(-1)^i \tbinom{n}{i}\,
\ad_E^{n-i}\partial_1^{n}\partial_{c-1}^i\;p(H+n+1-c-i)E^{i+1} \\
=\sum_{i=0}^{n}(-1)^i\tbinom{n}{i}\,
\ad_E^{n-i}\partial_1^{n}\partial_{c-1}^i\;p(H+n-1-i)E^{i+1}.
\end{multline*}
\begin{remark}\label{rmk:degree}
If $\deg(p)=N$ then the only non zero equations of $\Eq^\infty$ are
$\eq_{n+1}^n$ for $n=0,\dots, N$. In this case, the last two non
zero equations are, \vspace{1mm}
\[
\begin{array}{rrl}
\eq_{N\!+\!1}^N\!\!:& \ad_E^{N+1}(p_N)=&\!\!\!0, \\[2mm]
\eq_{N}^{N\!-\!1}\!\!:&\!\!\!\!
\ad_E^{N}(p_{N\!-\!1})+N\ad_E^{N}(p_N)(H\!+\!\tfrac{N\!+\!1}2)
+N(N\!+\!1\!-\!Nc)\ad_E^{N\!-\!1}(p_N)E=&\!\!\!0 .
\end{array}
\]
Indeed, the equation $\eq_{N+1}^N$ is
\[
\ad_E^{N+1}\partial_1^{N}\,p(H\!+\!N\!+\!1)
+\ad_E^{N}\partial_1^{N}\,p(H\!+\!N\!+\!1\!-\!c)E\\
=\ad_E^{N}\partial_1^{N}\,p(H\!+\!N\!-\!1)E,
\]
however, since $\partial_1^{N}\,p=N!\,p_N$ the equation $\eq_{N+1}^N$
becomes $\ad_E^{N+1}(p_N)=0$. For $\eq_{N}^{N-1}$ we use that
$\partial_1^{N-1}p=(N\!-\!1)!\,(Np_N\,t-\tbinom{N}{2}p_N+p_{N-1}),$
and proceed in a similar way.
\end{remark}

\vspace{1mm}

The main theorem of this section is the following.

\begin{theorem}\label{thm:Eqinfinity}
Assume that $[H,E]=cE$.
For all $k\ge0$, the first $N$ equations of the system $\Eq^k$
are equivalent to the first $N$ equations of the system $\Eq^0$.
In particular this is true for the upper triangular system $\Eq^\infty$.
\end{theorem}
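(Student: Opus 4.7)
The plan is to proceed by induction on $k$, showing at each step that the first $N$ equations of $\Eq^k$ are equivalent to the first $N$ equations of $\Eq^{k-1}$. The base case $k=0$ is tautological. For the inductive step, I first observe that when $N \ge k$ the systems $\Eq^k$ and $\Eq^{k-1}$ share, among their first $N$ equations, the common diagonal prefix $\eq_1^0, \eq_2^1, \dots, \eq_k^{k-1}$, so I only need to show that modulo this prefix the tail families $\{\eq_n^k : k < n \le N\}$ and $\{\eq_n^{k-1} : k < n \le N\}$ are mutually derivable; the case $N \le k$ is immediate because the first $N$ equations of both systems are identical.

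The key I would try to establish is a \emph{one-step elimination identity}: for each $n > k$, $\eq_n^k$ is an explicit $\mathcal{A}$-bilinear combination of $\eq_n^{k-1}$ and $\eq_{n-1}^{k-1}$, whose leading coefficient on $\eq_n^{k-1}$ is a unit so that the identity is invertible. My starting point is the factorization $\partial_1^k = \partial_1\,\partial_1^{k-1}$, which splits every occurrence of $\partial_1^k \partial_{c-1}^i p(H+n-cj-i)$ into
\[
\partial_1^{k-1}\partial_{c-1}^i\, p(H+n-cj-i)\; -\; \partial_1^{k-1}\partial_{c-1}^i\, p(H+(n-1)-cj-i).
\]
I expect the first piece to collapse onto the LHS of $\eq_n^{k-1}$ after invoking the two Pascal identities $\binom{k}{i}=\binom{k-1}{i}+\binom{k-1}{i-1}$ and $\binom{n-k+1}{j}=\binom{n-k}{j}+\binom{n-k}{j-1}$ to reconcile the enlarged index ranges, and the second piece to recombine (via a right multiplication by $E$ together with the commutation \eqref{eq:id3} $E^m H^r=(H-mc)^r E^m$ to absorb the induced shifts of $H$) into the LHS of $\eq_{n-1}^{k-1}$. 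A parallel telescoping treats the RHS of $\eq_n^k$.

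Once the one-step identity is in place, the inductive step closes cleanly: assuming the first $N$ equations of $\Eq^{k-1}$ hold, each $\eq_n^k$ with $k<n\le N$ is automatic as a combination of $\eq_n^{k-1}$ and $\eq_{n-1}^{k-1}$; conversely, by a secondary induction on $n$ from $k+1$ up to $N$ together with the invertibility of the identity, each $\eq_n^{k-1}$ is recoverable from $\eq_n^k$ and the already-retrieved $\eq_{n-1}^{k-1}$. The hard part will be the combinatorial bookkeeping inside the one-step identity: four intertwined indices $n,k,i,j$ must be simultaneously rearranged by two Pascal recursions while matching, in each term, the shifted argument of $p$, the graduated powers $\ad_E^{n-j-i}$, and the right multipliers $E^{j+i}$ and $E^{n-(k-i)}$. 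Special attention will be needed for the boundary terms of the new index ranges, and for the places where an $\ad_E$ factor must be split via $\ad_E=L_E-R_E$ in order to transfer one copy of $E$ into a right multiplier so that the powers of $E$ in the two pieces align with those of $\eq_n^{k-1}$ and $\eq_{n-1}^{k-1}$.
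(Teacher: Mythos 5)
Your overall strategy---induction on $k$ driven by an explicit recursion that writes $\eq^{k}_n$ as an invertible combination of nearby equations of $\Eq^{k-1}$, obtained by telescoping one factor of $\partial_1$ out of $\partial_1^{k}$ and reconciling indices with Pascal identities---is exactly the strategy of the paper. The gap is in the arity of your ``one-step elimination identity'': a combination of only $\eq^{k-1}_n$ and $\eq^{k-1}_{n-1}$ with unit leading coefficient does not exist, and this already fails for $k=1$, $n=3$. To see it, compare the coefficients of the constant term $p_0$. Every summand of $\eq^{1}_{3}$ carries a factor $\partial_1$, so $p_0$ does not occur there, whereas $p_0$ occurs in $\eq^{0}_{n}$ with coefficient $L_E^{n}-R_E^{n}$. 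Take $\mathcal{A}=\k[E]$, $H=0$, $\mathcal{M}=\k[E]\otimes_\k\k[E]$, so that $L_E,R_E$ become algebraically independent commuting variables $s,t$ and every $\mathcal{A}$-bilinear coefficient is a polynomial in $s,t$. An identity $\eq^{1}_{3}=A\,\eq^{0}_{3}+B\,\eq^{0}_{2}$ then forces $A(s^{3}-t^{3})+B(s^{2}-t^{2})=0$, hence $(s+t)\mid A(s^{2}+st+t^{2})$, and since $\gcd(s+t,\,s^{2}+st+t^{2})=1$ we get $(s+t)\mid A$. Thus $A$ is a multiple of $L_E+R_E$ and cannot be a unit, so the converse half of your secondary induction (recovering $\eq^{k-1}_n$ from $\eq^{k}_n$ and already-retrieved equations) breaks down.

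The repair is to allow a second-order recursion, which is what the paper proves (Recursion Formula \ref{prop:recursion}): for $n>k>0$,
\[
\eq^{k}_n=\eq^{k-1}_n-2\,\eq^{k-1}_{n-1}E+\eq^{k-1}_{n-2}E^{2}
-\ad_E\big(\eq^{k-1}_{n-1}\big)+\ad_E\big(\eq^{k-1}_{n-2}\big)E,
\]
so the coefficient of $\eq^{k-1}_{n}$ is $1$, that of $\eq^{k-1}_{n-1}$ is $-(L_E+R_E)$, and that of $\eq^{k-1}_{n-2}$ is $L_ER_E$; this is precisely why the fundamental sequence $T_0$ of the periodic Gaussian elimination has \emph{two} nonzero subdiagonals. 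The third row $\eq^{k-1}_{n-2}$ is what absorbs the leftover terms created when you split $\ad_E=L_E-R_E$ to realign the powers of $\ad_E$ and of the right multiplier $E$ between rows $n$ and $n-1$: that splitting produces a piece matching $\eq^{k-1}_{n-1}E$ and a residual piece that only cancels against contributions from row $n-2$. With the three-term identity in hand, the leading coefficient is $1$ and your double induction (on $k$, then on $n$ from $k+1$ to $N$) closes exactly as you describe.
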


\begin{proof}
The theorem easily follows by induction once we prove
that the equations $\eq^{k}_n$ satisfy
the following recursion formula
%\begin{propositionR}\label{prop:recursion}
\begin{equation}\label{prop:recursion}
\eq^{k}_n=
\eq^{k-1}_n-2\eq^{k-1}_{n-1}E+\eq^{k-1}_{n-2}E^2
-\ad_E(\eq^{k-1}_{n-1})+\ad_E(\eq^{k-1}_{n-2})E
\end{equation}
for $n> k> 0$.

The proof of this recursion formula is very technical and will be done
at the end of this section.
We point out that this formula shows that, for $n>k$, the
equation $\eq^k_n$ of $\Eq^k$ is equal to the equation $\eq^{k-1}_n$
of $\Eq^{k-1}$ plus a linear combination of the two previous
equations of $\Eq^{k-1}$.
\end{proof}

\begin{corollary}\label{coro:first N}
Assume that $[H,E]=cE$.
If $p \in {\mathcal{M}}[t]$
satisfies
\[E^np(H+n)=p(H-n)E^n\;\text{ for $n=1,\dots,\deg(p)$},\]
then $E^np(H+n)=p(H-n)E^n$ holds for all
$n\ge0$.
\end{corollary}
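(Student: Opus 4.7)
The natural strategy is to use Theorem \ref{thm:Eqinfinity} to replace the given system $\Eq^0=\{\eq_n^0:n\ge 1\}$ by its triangular equivalent $\Eq^\infty=\{\eq_{n+1}^n:n\ge 0\}$, and then exploit Remark \ref{rmk:degree} to reduce $\Eq^\infty$ to a finite block.

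Write $N=\deg(p)$. Since every term of $\eq_{n+1}^n$ carries a factor $\partial_1^n(p)$, the equation $\eq_{n+1}^n$ collapses to $0=0$ whenever $n>N$. Hence, by Remark \ref{rmk:degree}, the only potentially non-trivial rows of $\Eq^\infty$ are $\eq_{j+1}^j$ with $0\le j\le N$ --- a total of $N+1$ equations, the last of which reduces to the condition $\ad_E^{N+1}(p_N)=0$ on the leading coefficient. Thus, to prove that $p$ satisfies every $\eq_n^0$, it suffices to verify these $N+1$ rows of $\Eq^\infty$.

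Apply Theorem \ref{thm:Eqinfinity} with the integer $N+1$: it asserts that the first $N+1$ equations of $\Eq^\infty$ are equivalent to the first $N+1$ equations of $\Eq^0$. The hypothesis of the corollary (together with the tautological $n=0$ equation $p(H)=p(H)$) supplies exactly these rows of $\Eq^0$, so all $N+1$ non-trivial rows of $\Eq^\infty$ hold. A second application of Theorem \ref{thm:Eqinfinity} with any $M\ge 1$ then shows that the first $M$ equations of $\Eq^0$ hold, for every $M$, giving $E^np(H+n)=p(H-n)E^n$ for all $n\ge 0$.

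The main point demanding care is the index bookkeeping: one must match the number of equations granted by the hypothesis against the $N+1$ non-trivial rows of $\Eq^\infty$ identified in Remark \ref{rmk:degree}, and then invoke Theorem \ref{thm:Eqinfinity} in both directions (first to transport the hypothesis from $\Eq^0$ to $\Eq^\infty$, and then to spread the full validity of $\Eq^\infty$ back to every row of $\Eq^0$). Once the counting is set up correctly, the corollary is an immediate consequence of Theorem \ref{thm:Eqinfinity} and the degree observation.
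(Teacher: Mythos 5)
Your overall strategy --- transport the hypothesis into the triangular system $\Eq^\infty$ via Theorem \ref{thm:Eqinfinity}, kill the high rows by the degree argument of Remark \ref{rmk:degree}, and transport back --- is the same as the paper's, but there is a genuine gap at the counting step. The system $\Eq^0$ is indexed from $n=1$, so its ``first $N+1$ equations'' are $\eq^0_1,\dots,\eq^0_{N+1}$, i.e.\ the relations $E^np(H+n)=p(H-n)E^n$ for $n=1,\dots,N+1$. The hypothesis supplies only $n=1,\dots,N$, and the tautological $n=0$ relation cannot stand in for the missing row $n=N+1$: the passage between $\Eq^0$ and $\Eq^\infty$ is by invertible row operations, so $N$ genuine rows plus a row $0=0$ can never generate an $(N+1)$-st independent constraint. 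Hence your application of Theorem \ref{thm:Eqinfinity} ``with the integer $N+1$'' is not licensed, and you never obtain the one row of $\Eq^\infty$ that you yourself correctly flagged as non-automatic, namely $\eq^N_{N+1}:\ \ad_E^{N+1}(p_N)=0$.

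This is not repairable by better bookkeeping, because that extra condition really does not follow from the first $N$ equations: take $\mathcal M=\mathcal A=M_2(\k)$, $H=0$ (so $c=0$), $E=\mathrm{diag}(1,-1)$ and $p=p_1t$ with $p_1=\left(\begin{smallmatrix}0&1\\1&0\end{smallmatrix}\right)$; then $Ep_1=-p_1E$, so the $n=1$ equation holds, while the $n=2$ equation reads $2p_1=-2p_1$ and fails (equivalently $\ad_E^2(p_1)=4p_1\neq0$). To your credit, your analysis exposes the same soft spot in the paper's own proof, which asserts that ``all the remaining equations of $\Eq^\infty$'' are automatically satisfied, whereas Remark \ref{rmk:degree} only makes the rows with $n>N$ trivial and exhibits $\eq^N_{N+1}$ as a genuine condition on the leading coefficient. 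The degree-one discussion in \S\ref{sec:gausselimin}, where both \eqref{1} and \eqref{2} are needed for a linear polynomial, confirms that the hypothesis should run over $n=1,\dots,\deg(p)+1$; with that corrected hypothesis your argument, carried out with the honest count and without invoking the $n=0$ tautology, is complete.
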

\begin{proof}
Let $N=\deg(p)$. The assumption on $p$ says that it satisfies the first $N$
equations of the system $\Eq^{0}$, then Theorem \ref{thm:Eqinfinity}
implies that $p$ satisfies the first $N$ equations of
$\Eq^\infty$. On the other hand, we showed in Remark \ref{rmk:degree}
that $p$ automatically satisfies all the remaining equations of
$\Eq^\infty$.
Therefore $p$ satisfies all the equations of $\Eq^0$.
\end{proof}

In the following corollary we describe the particular cases in which
$c=0$ and $c=1$, in this cases the triangularized system became much
simpler.

\begin{corollary}\label{coro:c=0,c=1}
If $\ad_H(E)=0$ then the equations of the system $\Eq^\infty$  are
\begin{align*}
\eq_{n+1}^n:\sum_{i=0}^{n} \Big[\tbinom{n}{i}\,
\ad_E^{n\!+\!1\!-\!i}\partial_1^{n\!+\!i}\;p(H\!+\!n\!+\!1)&E^{i}
+\tbinom{n}{i}\ad_E^{n\!-\!i}\partial_1^{n\!+\!i}\;p(H\!+\!n\!+\!1)E^{i\!+\!1} \Big] \\
=&\sum_{i=0}^{n}\tbinom{n}{i}\,
\ad_E^{n\!-\!i}\partial_1^{n\!+\!i}\;p(H\!+\!n\!-\!1)E^{i\!+\!1}.
\end{align*}
If $\ad_H(E)=E$ then the equations of the system $\Eq^\infty$  are
\[
\eq_{n+1}^n:\ad_E^{n+1}\partial_1^{n}\;p(H+n+1)
+\ad_E^{n}\partial_1^{n+1}\;p(H+n)E=0.
\]
\end{corollary}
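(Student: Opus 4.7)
The plan is to derive both identities by specializing the general form of $\eq_{n+1}^n$ (the displayed formula just before Remark \ref{rmk:degree}) to the values $c=0$ and $c=1$.

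For the case $c=0$ I would first establish the operator identity $\partial_{-1}=\partial_1\circ T$, where $T$ is the forward shift $(Tf)(t)=f(t+1)$; this is immediate from $\partial_{-1}f(t)=f(t+1)-f(t)=\partial_1f(t+1)$. Since $T$ and $\partial_1$ commute, iterating yields $\partial_{-1}^i=\partial_1^i T^i$, and hence
\[
\partial_{-1}^i\,p(H+r-i)=\partial_1^i\,p(H+r)\qquad\text{for every integer }r.
\]
Substituting this into each of the three sums in the general form of $\eq_{n+1}^n$ replaces $\partial_1^n\partial_{-1}^i\,p(H+r-i)$ by $\partial_1^{n+i}\,p(H+r)$ and removes every $-i$ shift from the arguments, producing the stated identity directly.

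For the case $c=1$ the operator $\partial_{c-1}=\partial_0$ is not defined under the paper's convention, so the general formula cannot be used by naive substitution. The cleanest route is to re-run the gaussian-elimination argument of Section \ref{subsec:def eq} with $c=1$ built in from the start, exploiting the commutation identity $f(H)\,E=E\,f(H+1)$ (equivalent to $[H,E]=E$) that holds for every polynomial $f$. Under this identity, propagating a power of $E$ past a polynomial in $H$ produces a unit shift and hence only $\partial_1$-differences, never $\partial_{c-1}$-differences; consequently the $i\geq 1$ contributions of the general formula never arise, and only the $i=0$ terms survive. Combining the $i=0$ pieces of the second and third sums of $\eq_{n+1}^n$ via $p(H+n)-p(H+n-1)=\partial_1 p(H+n)$ then gives the $\ad_E^n\,\partial_1^{n+1}p(H+n)\,E$ summand of the claimed equation.

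The main obstacle will be the $c=1$ case: because $\partial_0$ is not defined in $\mathcal{M}[t]$, one cannot simply set $c=1$ in the general formula, and the disappearance of the derivative terms has to be justified either by the direct rederivation above or by an analytic-continuation argument (using that $\partial_h^i p$ extends polynomially in $h$ to $h=0$, where it equals $p^{(i)}$, and that for this particular combination the higher-derivative contributions cancel). The $c=0$ case, by contrast, is a routine substitution once the shift identity $\partial_{-1}^i=\partial_1^iT^i$ is in hand.
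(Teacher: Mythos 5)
Your strategy for the case $c=0$ --- specialize the displayed general form of $\eq_{n+1}^n$ --- is the same as the paper's, but your key operator identity has the wrong sign relative to what the stated formula requires, so the argument as written does not deliver the corollary. Each of the three sums defining $\eq_{n+1}^n$ carries a factor $(-1)^i$, whereas the formula in the corollary has none; hence the substitution produces the stated identity only if $(-1)^i\partial_{c-1}^{\,i}\,p(H+r-i)=\partial_1^{\,i}\,p(H+r)$, i.e.\ only if $\partial_{-1}^{\,i}p(t)=(-1)^{i}\partial_1^{\,i}p(t+i)$. Your identity $\partial_{-1}=\partial_1\circ T$ gives $\partial_{-1}^{\,i}p(t)=\partial_1^{\,i}p(t+i)$ instead, so after your substitution every summand retains a spurious $(-1)^i$ and the result is not the corollary. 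You can see the discrepancy already at $n=1$: computing $\eq_2^1$ from the Recursion Formula \eqref{prop:recursion} applied to $\eq_2^0$, $\eq_1^0$, $\eq_0^0$ yields $+\partial_1^2p(H+2)E^2$ as the last term on the left, while your route yields $-\partial_1^2p(H+2)E^2$. The paper instead uses $\partial_{-1}p(t)=-\partial_1p(t+1)$, which is what the undivided difference $\partial_hp(t)=p(t)-p(t-h)$ gives; that is the convention actually in force throughout \S\ref{sec:gausselimin} (it is the one under which \eqref{eq:Pochhammer} and the $\partial_{c-1}$ manipulations in the proof of the Recursion Formula are correct), even though the displayed definition of $\partial_h$ divides by $h$. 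So you must either adopt that sign for $\partial_{-1}$ or account for the factors $(-1)^i$; as it stands your premise and your conclusion are incompatible.

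For $c=1$ your worry about $\partial_0$ is fair, but the proof is not actually carried out: both proposed repairs (re-running the elimination with $c=1$ built in, or continuing $\partial_h^{\,i}p$ in $h$ to $h=0$ and checking that the derivative contributions cancel) are left as sketches, and the second would require a genuine additional argument. Under the undivided-difference convention above the point is immediate, since $\partial_0p=p-p=0$ and hence every $i\ge1$ term of $\eq_{n+1}^n$ vanishes; this is exactly the paper's one-line justification. Your final step, combining the surviving $i=0$ terms via $\partial_1^{\,n}p(H+n)-\partial_1^{\,n}p(H+n-1)=\partial_1^{\,n+1}p(H+n)$, is correct.
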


\begin{proof}
If $E$ and $H$ commute then $c=0$, hence
\[
\partial_{c-1}p(t)=\partial_{-1}p(t)=-\partial_{1}p(t+1).
\]
Therefore $\partial_{c-1}^{i}p(t)=(-1)^{i}\partial_{1}^ip(t+i)$ and
the formula follows.  On the other hand, if $\ad_H(E)=E$ then $c=1$,
and hence $\partial_{c-1}=\partial_{0}=0$. Therefore all the terms
of $\eq_{n+1}^n$ for $i>0$ are zero and thus
\[
\eq_{n+1}^n:\ad_E^{n+1}\partial_1^{n}\;p(H+n+1)
+\ad_E^{n}\partial_1^{n}\;p(H+n)E
=\ad_E^{n}\partial_1^{n}\;p(H+n-1)E,
\]
which is equal to the equation stated in the corollary.
\end{proof}

%========================================================================
%\section{Proof of the Recursion Formula \ref{prop:recursion}}\label{sec:Proof}
%========================================================================

\subsection{Proof of the Recursion Formula \ref{prop:recursion}}
We work first on the left hand sides (LHS) of the equations involved in the
Recursion Formula \ref{prop:recursion}
(see the definition of $\eq^{k}_n$ in Subsection \ref{subsec:def eq}).
In fact, we need to prove that
\[
\text{LHS}\big(
\eq^{k-1}_n-2\eq^{k-1}_{n-1}E+\eq^{k-1}_{n-2}E^2
-\ad_E(\eq^{k-1}_{n-1})+\ad_E(\eq^{k-1}_{n-2})E\big)
=\text{LHS}\big(\eq^{k}_n\big).
\]
Since the sums in the definition of $\eq^{k}_n$ runs over all
possible values of $i$ and $j$ for which the
binomial numbers are not zero we omit them.
\begin{align*}
\text{LHS}\big(\eq^{k-1}_n\big)           &:\\
  (-1)^i \tbinom{k-1}{i}\tbinom{n-k+1}{j}   &\,
   \ad_E^{n-(j+i)}\partial_1^{k-1}\partial_{c-1}^i\;
      b(H+n-cj-i)E^{j+i} \\[2mm]
\text{LHS}\big(-2\eq^{k-1}_{n-1}E\big)         & :\\
  -(-1)^{i}\tbinom{k-1}{i}\tbinom{n-k}{j}    & \,
   \ad_E^{n-1-(j+i)}\partial_1^{k-1}\partial_{c-1}^i\;
      b(H+n-1-cj-i)E^{j+i+1} \\
  -(-1)^{i}\tbinom{k-1}{i}\tbinom{n-k}{j}    & \,
   \ad_E^{n-1-(j+i)}\partial_1^{k-1}\partial_{c-1}^i\;
      b(H+n-1-cj-i)E^{j+i+1} \\[2mm]
\text{LHS}\big(+\eq^{k-1}_{n-2}E^2\big)       & :\\
   +(-1)^i\tbinom{k-1}{i}\tbinom{n-1-k}{j}    & \,
      \ad_E^{n-2-(j+i)}\partial_1^{k-1}\partial_{c-1}^i\;
        b(H+n-2-cj-i)E^{j+i+2} \\[2mm]
\text{LHS}\big(-\ad_E(\eq^{k-1}_{n-1})\big)          & :\\
-(-1)^{i}\tbinom{k-1}{i} \tbinom{n-k}{j}            & \,
   \ad_E^{n-(j+i)}\partial_1^{k-1}\partial_{c-1}^i\;
     b(H+n-1-cj-i)E^{j+i} \\
       -(-1)^{i}\tbinom{k-1}{i}\tbinom{n-k}{j}      & \,
         \ad_E^{n-1-(j+i)}\partial_1^{k-1}\partial_{c-1}^i\;
           b(H+n-1-cj-c-i)E^{j+i+1} \\
       +(-1)^{i}\tbinom{k-1}{i}\tbinom{n-k}{j}      & \,
          \ad_E^{n-1-(j+i)}\partial_1^{k-1}\partial_{c-1}^i\;
            b(H+n-1-cj-i)E^{j+i+1} \\[2mm]
\text{LHS}\big(+\ad_E(\eq^{k-1}_{n-2})E \big)        &:\\
  +(-1)^i\tbinom{k-1}{i}\tbinom{n-1-k}{j}             &\,
   \ad_E^{n-1-(j+i)}\partial_1^{k-1}\partial_{c-1}^i\;
     b(H+n-2-cj-i)E^{j+i+1} \\
       +(-1)^{i} \tbinom{k-1}{i}\tbinom{n-1-k}{j}     &\,
         \ad_E^{n-2-(j+i)}\partial_1^{k-1}\partial_{c-1}^i\;
           b(H+n-2-cj-c-i)E^{j+i+2} \\
       -(-1)^{i} \tbinom{k-1}{i}\tbinom{n-1-k}{j}     &\,
          \ad_E^{n-2-(j+i)}\partial_1^{k-1}\partial_{c-1}^i\;
            b(H+n-2-cj-i)E^{j+i+2}
\end{align*}
We now indicate how to simplify these ten lines.

Lines 3 and 7 cancel out and so do lines 4 and 10.
Lines 6, 8 and 9, after a change of variables, become
\begin{align*}
       -(-1)^{i}\tbinom{k-1}{i}\tbinom{n-k}{j-1}      & \,
         \ad_E^{n-(j+i)}\partial_1^{k-1}\partial_{c-1}^i\;
           b(H+n-1-cj-i)E^{j+i} \\
  +(-1)^i\tbinom{k-1}{i}\tbinom{n-1-k}{j-1}             &\,
   \ad_E^{n-(j+i)}\partial_1^{k-1}\partial_{c-1}^i\;
     b(H+n-2-cj+c-i)E^{j+i} \\
       +(-1)^{i} \tbinom{k-1}{i}\tbinom{n-1-k}{j-2}     &\,
         \ad_E^{n-(j+i)}\partial_1^{k-1}\partial_{c-1}^i\;
           b(H+n-2-cj+c-i)E^{j+i},
\end{align*}
which is equal to
\begin{align*}
       -(-1)^{i}\tbinom{k-1}{i}\tbinom{n-k}{j-1}      & \,
         \ad_E^{n-(j+i)}\partial_1^{k-1}\partial_{c-1}^i\;
           b(H+n-1-cj-i)E^{j+i} \\
  +(-1)^i\tbinom{k-1}{i}\tbinom{n-k}{j-1}             &\,
   \ad_E^{n-(j+i)}\partial_1^{k-1}\partial_{c-1}^i\;
     b(H+n-2-cj+c-i)E^{j+i} \\
\end{align*}
and finally equal to
\[
 (-1)^i \tbinom{k-1}{i}\tbinom{n-k}{j-1}\,
   \ad_E^{n-(j+i)}\partial_1^{k-1}\partial_{c-1}^{i+1}\;
      b(H+n-1-c(j-1)-(i+1))E^{j+i}.
\]
Similarly, lines 1, 2 and 5 yield
\begin{align*}
  &(-1)^i \tbinom{k-1}{i}\tbinom{n-k}{j}\,
   \ad_E^{n-(j+i)}\partial_1^{k}\partial_{c-1}^i\;
      b(H+n-cj-i)E^{j+i} \\
 -&(-1)^i \tbinom{k-1}{i}\tbinom{n-k}{j-1}\,
   \ad_E^{n-(j+i)}\partial_1^{k-1}\partial_{c-1}^{i+1}\;
      b(H+n-c(j-1)-(i+1))E^{j+i}.
\end{align*}
Thus we have,
\begin{align*}
 &(-1)^i \tbinom{k-1}{i}\tbinom{n-k}{j-1}\,
   \ad_E^{n-(j+i)}\partial_1^{k-1}\partial_{c-1}^{i+1}\;
      b(H+n-1-c(j-1)-(i+1))E^{j+i}\\
  &+(-1)^i \tbinom{k-1}{i}\tbinom{n-k}{j}\,
   \ad_E^{n-(j+i)}\partial_1^{k}\partial_{c-1}^i\;
      b(H+n-cj-i)E^{j+i} \\
 -&(-1)^i \tbinom{k-1}{i}\tbinom{n-k}{j-1}\,
   \ad_E^{n-(j+i)}\partial_1^{k-1}\partial_{c-1}^{i+1}\;
      b(H+n-c(j-1)-(i+1))E^{j+i}\\[2mm]
 =-&(-1)^i \tbinom{k-1}{i}\tbinom{n-k}{j-1}\,
   \ad_E^{n-(j+i)}\partial_1^{k}\partial_{c-1}^{i+1}\;
      b(H+n-c(j-1)-(i+1))E^{j+i}\\
  &+(-1)^i \tbinom{k-1}{i}\tbinom{n-k}{j}\,
   \ad_E^{n-(j+i)}\partial_1^{k}\partial_{c-1}^i\;
      b(H+n-cj-i)E^{j+i} \\[2mm]
 =\;\;\;&(-1)^i \tbinom{k-1}{i-1}\tbinom{n-k}{j}\,
   \ad_E^{n-(j+i)}\partial_1^{k}\partial_{c-1}^{i}\;
      b(H+n-cj-i)E^{j+i}\\
  &+(-1)^i \tbinom{k-1}{i}\tbinom{n-k}{j}\,
   \ad_E^{n-(j+i)}\partial_1^{k}\partial_{c-1}^i\;
      b(H+n-cj-i)E^{j+i} \\[2mm]
 =\;\;\;&(-1)^i \tbinom{k}{i}\tbinom{n-k}{j}\,
   \ad_E^{n-(j+i)}\partial_1^{k}\partial_{c-1}^{i}\;
      b(H+n-cj-i)E^{j+i}\\[2mm]
 =\;\;\;&\text{LHS}\big(\eq^{k}_n\big).
 \end{align*}
We now work on the right hand sides (RHS) of the equations involved in the
Recursion Formula \ref{prop:recursion}.
\begin{align*}
\text{RHS}\big(\eq^{{k-1}}_n\big)      &:\\
(-1)^i\tbinom{k-1}{i}\,              &
\ad_E^{k-1-i}\partial_1^{k-1}\partial_{c-1}^i\;
b(H-n+2(k-1)-i)E^{n-(k-1-i)} \\[2mm]
\text{RHS}\big(-2\eq^{k-1}_{n-1}E\big)  & :\\
   -(-1)^i\tbinom{k-1}{i}\,             &
     \ad_E^{k-1-i}\partial_1^{k-1}\partial_{c-1}^i\;
       b(H-n+1+2(k-1)-i)E^{n-(k-1-i)} \\
   -(-1)^i\tbinom{k-1}{i}\,             &
     \ad_E^{k-1-i}\partial_1^{k-1}\partial_{c-1}^i\;
       b(H-n+1+2(k-1)-i)E^{n-(k-1-i)} \\[2mm]
\text{RHS}\big(+\eq^{k-1}_{n-2}E^2\big)       & :\\
   +(-1)^i\tbinom{k-1}{i}\,             &
     \ad_E^{k-1-i}\partial_1^{k-1}\partial_{c-1}^i\;
       b(H-n+2+2(k-1)-i)E^{n-(k-1-i)} \\[2mm]
\text{RHS}\big(-\ad_E(\eq^{k-1}_{n-1})\big)          & :\\
   -(-1)^i\tbinom{k-1}{i}\,             &
     \ad_E^{k-i}\partial_1^{k-1}\partial_{c-1}^i\;
       b(H-n+1+2(k-1)-i)E^{n-(k-i)} \\
   -(-1)^i\tbinom{k-1}{i}\,             &
     \ad_E^{k-1-i}\partial_1^{k-1}\partial_{c-1}^i\;
       b(H-n+1+2(k-1)-c-i)E^{n-(k-1-i)} \\
   +(-1)^i\tbinom{k-1}{i}\,             &
     \ad_E^{k-1-i}\partial_1^{k-1}\partial_{c-1}^i\;
       b(H-n+1+2(k-1)-i)E^{n-(k-1-i)} \\[2mm]
\text{RHS}\big(+\ad_E(\eq^{k-1}_{n-2})E \big)        &:\\
   +(-1)^i\tbinom{k-1}{i}\,             &
     \ad_E^{k-i}\partial_1^{k-1}\partial_{c-1}^i\;
       b(H-n+2+2(k-1)-i)E^{n-(k-i)} \\
   +(-1)^i\tbinom{k-1}{i}\,             &
     \ad_E^{k-1-i}\partial_1^{k-1}\partial_{c-1}^i\;
       b(H-n+2+2(k-1)-c-i)E^{n-(k-1-i)} \\
   -(-1)^i\tbinom{k-1}{i}\,             &
     \ad_E^{k-1-i}\partial_1^{k-1}\partial_{c-1}^i\;
       b(H-n+2+2(k-1)-i)E^{n-(k-1-i)}
\end{align*}
Now the sum of lines 1 and 2 is equal to
\[
-(-1)^i\tbinom{k-1}{i}\,
\ad_E^{k-1-i}\partial_1^{k}\partial_{c-1}^i\;
b(H-n-1+2k-i)E^{n-(k-1-i)},
\]
similarly the sum of lines 5 and 8 is equal to
\[
(-1)^i\tbinom{k-1}{i}\,
     \ad_E^{k-i}\partial_1^{k}\partial_{c-1}^i\;
       b(H-n+2k-i)E^{n-(k-i)},
\]
and the sum of lines 6 and 9 is equal to
\[
   (-1)^i\tbinom{k}{i}\,
     \ad_E^{k-1-i}\partial_1^{k}\partial_{c-1}^i\;
       b(H-n+2k-c-i)E^{n-(k-1-i)}.
\]
Finally, the sum of the last three lines is equal to
\begin{align*}
 &- (-1)^i\tbinom{k-1}{i}\,
     \ad_E^{k-1-i}\partial_1^{k}\partial_{c-1}^{i+1}\;
       b(H-n+2k-(i+1))E^{n-(k-1-i)}\\[2mm]
 &+ (-1)^i\tbinom{k-1}{i}\,
     \ad_E^{k-i}\partial_1^{k}\partial_{c-1}^i\;
       b(H-n+2k-i)E^{n-(k-i} \\[2mm]
=& (-1)^i\tbinom{k-1}{i-1}\,
     \ad_E^{k-i}\partial_1^{k}\partial_{c-1}^{i}\;
       b(H-n+2k-i)E^{n-(k-i)}\\[2mm]
 &+(-1)^i\tbinom{k-1}{i}\,
     \ad_E^{k-i}\partial_1^{k}\partial_{c-1}^i\;
       b(H-n+2k-i)E^{n-(k-i)} \\[2mm]
=& (-1)^i\tbinom{k}{i}\,
     \ad_E^{k-i}\partial_1^{k}\partial_{c-1}^{i}\;
       b(H-n+2k-i)E^{n-(k-i)}\\[2mm]
=&\;\; \text{RHS} \big(\eq^{k}_n\big).
\end{align*}
This finishes the proof of the Recursion Formula \ref{prop:recursion}.

\

The following two sections are devoted to express
the system \eqref{eq:B} in a matrix form and to give its LU-decomposition in a general context.
In particular we will not assume that $[H,E]=cE$.

%========================================================================
\section{Infinite matrices \\ and the matrix representation of the system $\eqref{eq:B}$}\label{sec.MatrixRep}
%========================================================================
In this section we recall some preliminaries on infinite
matrices, introduce the concept of \emph{periodic Gaussian
elimination process} and present the matrix representation of the system $\eqref{eq:B}$.

\subsection{Infinite matrices}
Given an associative  $\k$-algebra $\mathcal{A}$, let
$\mathcal{M}_\infty(\mathcal{A})$ denote the set of all infinite
matrices
\[
A=\begin{pmatrix}
a_{11} & a_{12} & a_{13} & \cdot\;\; \\
a_{21} & a_{22} & a_{23} & \cdot\;\; \\
a_{31} & a_{32} & a_{33} & \cdot\;\; \\
. & . & . &
\end{pmatrix}
\]
with $a_{ij}\in\mathcal{A}$. The  $n$-\emph{minor} of a matrix
$A\in\mathcal{M}_\infty(\mathcal{A})$ is the matrix corresponding to
the upper-left corner of $A$ of size $n\times n$.
We say that a sequence of matrices $B_k$ \emph{converges} to $B$ if for
every $i$, $j$ there exists $k_0$ such that
$(B_k)_{ij}=B_{ij}$ for all $k\ge k_0$.
A matrix
$A\in\mathcal{M}_\infty(\mathcal{A})$ is said \emph{row-finite}
(resp. \emph{column-finite}) if every row (resp. column) of $A$
contains only a finite number of non zero elements. Lower triangular
and upper triangular matrices are, respectively, examples of
row-finite and  column-finite matrices.

\medskip

It is clear that  $\mathcal{M}_\infty(\mathcal{A})$ is not a ring since the
multiplication of two matrices does not always exist.
Nevertheless, if $A,B\in\mathcal{M}_\infty(\mathcal{A})$
and either $A$ is row-finite or $B$ is column-finite then $AB$ do exist.
It is not difficult to prove the following proposition.

\vspace{2mm}

\begin{proposition}

\

\begin{enumerate}[(a)]
\item
A lower or upper triangular matrix
$A\in\mathcal{M}_\infty(\mathcal{A})$ is invertible if and only if
all $n$-minors of $A$ are invertible.
\item
An $LU$-factorization of a matrix
$A\in\mathcal{M}_\infty(\mathcal{A})$ exists if and only if it
exists for all $n$-minors of $A$. Moreover, in this case the
$LU$-factorization of $A$ is unique.
\end{enumerate}
\end{proposition}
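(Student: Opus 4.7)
The plan is to reduce both parts of the proposition to the corresponding facts for finite matrices. The key observation I would first establish is that for triangular matrices the operations of taking products, inverses, and LU-factors all behave well with respect to $n$-minors: if $A$ is lower triangular and $B$ is lower triangular, the product $AB$ exists (since $A$ is row-finite and $B$ is column-finite) and satisfies $(AB)_n = A_n B_n$; the same holds in the upper triangular case, and also for mixed products $LU$ with $L$ lower and $U$ upper triangular. Granting this, both parts follow from the standard finite statements together with a consistency (``nesting'') argument.

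For part (a), forward direction: if $A$ is lower triangular with $AB = BA = I$, solving $AB = I$ column by column (using that $A$ is row-finite and lower triangular) forces $B$ to be lower triangular as well, and then $(AB)_n = A_n B_n = I_n$ exhibits $A_n$ as invertible with inverse $B_n$. For the converse, writing
\[
A_{n+1}= \begin{pmatrix} A_n & 0 \\ r_n & d_n \end{pmatrix}
\]
in block form, invertibility of $A_{n+1}$ forces $d_n$ to be invertible, and the standard block formula gives $A_{n+1}^{-1}$ with upper-left $n \times n$ block equal to $A_n^{-1}$. This compatibility allows me to define a lower triangular $B \in \mathcal{M}_\infty(\mathcal{A})$ with $B_n = A_n^{-1}$ for every $n$, and an entry-wise check confirms $AB = BA = I$. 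The upper triangular case is symmetric.

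For part (b), the forward implication follows immediately from $(LU)_n = L_n U_n$. For the converse, I would build the factorization inductively. Adopting the convention that the $L$ factor is unit lower triangular and the $U$ factor has invertible diagonal entries (the condition that makes the finite LU-factorization unique), suppose $A_n = L_n U_n$ has been constructed and partition
\[
A_{n+1} = \begin{pmatrix} A_n & b \\ c & d \end{pmatrix}.
\]
Seeking $L_{n+1} = \bigl(\begin{smallmatrix} L_n & 0 \\ x & 1 \end{smallmatrix}\bigr)$ and $U_{n+1} = \bigl(\begin{smallmatrix} U_n & y \\ 0 & z \end{smallmatrix}\bigr)$ yields the equations $L_n y = b$, $x U_n = c$, and $z = d - xy$, which admit unique solutions because $L_n$ and $U_n$ are invertible. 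Consistency of the $L_n$ and $U_n$ across $n$ is precisely the uniqueness of the finite LU-factorization applied to the upper-left $n$-minor of $A_{n+1}$, which equals $A_n$; hence they assemble into infinite triangular matrices $L, U$ with $A = LU$. Uniqueness of the infinite factorization is then automatic, since any other $A = L' U'$ would satisfy $A_n = L'_n U'_n$ for every $n$, and finite uniqueness forces $L' = L$, $U' = U$.

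The main technical obstacle is simply the bookkeeping needed to verify that the infinite products $AB$ and $LU$ recover the claimed identities entry by entry. This is where triangularity, hence row- or column-finiteness, is essential: every entry of the infinite product is a finite sum that agrees with the corresponding entry of the finite $n$-minor product for $n$ large enough, so the convergence notion introduced earlier applies cleanly and no subtlety about infinite sums intervenes.
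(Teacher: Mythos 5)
The paper never actually proves this proposition---it is asserted with ``it is not difficult to prove''---so there is no argument of the authors to compare against; your proof is the natural one and is essentially correct. The whole reduction rests, as you say, on the compatibility $(AB)_n=A_nB_n$ for products of triangular matrices (every entry of the infinite product being a finite sum confined to the relevant minor) together with the nesting of the finite inverses and of the finite $L_n$, $U_n$, and you carry both out correctly. The one point worth making explicit is that your convention that the $U$-factor have invertible (or at least cancellable) diagonal entries is not cosmetic: without some such hypothesis the uniqueness claim in (b) fails already for finite matrices (take $A=0$), and the step in (a) where you deduce that the two-sided inverse $B$ of a lower triangular $A$ is itself lower triangular likewise uses left-cancellability of the diagonal entries of $A$. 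So your convention should be read as the implicit hypothesis under which the proposition, as stated, is true; this is consistent with how it is used later in the paper, where the $L$-factors are unit triangular and the $U$-factors have non-zero-divisor diagonal entries such as $(x-1)^i$.
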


\begin{definition}
Given a matrix $A\in\mathcal{M}_\infty(\mathcal{A})$ the
  \emph{shifted matrix} $s(A)$ of $A$ is
the diagonal blocked matrix formed by the identity
matrix of size $1\times 1$ and $A$, that is
\[
s(A)=
  \begin{pmatrix}
1     &    \\
          & A
\end{pmatrix}.
\]
\end{definition}

To apply an elementary row operation to a matrix $A$ is equivalent
to multiply $A$ on the left by a lower triangular matrix. For some
special matrices $A\in\mathcal{M}_\infty(\mathcal{A})$ it is
possible to perform the  elimination process of Gauss by using a
periodic sequence of elementary row operations, in the sense that
there exist a lower triangular matrix
$T_0\in\mathcal{M}_\infty(\mathcal{A})$, that would constitute the
fundamental part of the periodic sequence, such that the whole
periodic sequence of elementary row operations is of the form,
\[
T_0;\text{ next }s(T_0);\text{ next }s^2(T_0);\text{ next }s^3(T_0);\cdots.
\]
In other words, this means that the sequence of matrices
\[
A,\quad T_0A,\quad s(T_0)T_0A,\quad s^2(T_0)s(T_0)T_0A,\quad
s^3(T_0)s^2(T_0)s(T_0)T_0A, \quad\cdots \]
converges to an upper
triangular matrix.
This suggest the following definitions.

\begin{definition}
  Given  a lower triangular matrix $T_0\in\mathcal{M}_\infty(\mathcal{A})$
  the \emph{left iterated matrix}
  $T_0^{\text{L}}$ corresponding to $T_0$ is the infinite (from right to left) product
\[
T_0^{\text{L}}=\cdots  s^3(T_0)\,s^2(T_0)\,s(T_0)\,T_0.
\]
It is clear that this product converges to a lower triangular matrix.
Similarly, the \emph{right iterated matrix}
  $T_0^{\text{R}}$ corresponding to $T_0$ is the lower triangular matrix
given by the infinite (from left to right) product
\[
T_0^{\text{R}}=T_0\,s(T_0)\,s^2(T_0)\,s^3(T_0)\cdots.
\]
We say that a matrix $A\in\mathcal{M}_\infty(\mathcal{A})$
admits a \emph{periodic Gaussian elimination process} if there exist
a lower triangular matrix $T_0\in\mathcal{M}_\infty(\mathcal{A})$ such that
$T_0^{\text{L}}A$ is upper triangular.
In this case $T_0$ is called a \emph{fundamental sequence}
of the Gaussian elimination process of $A$.
\end{definition}
It is not difficult to prove that
 $T_0^{\text{L}}$ and $T_0^{\text{R}}$
are respectively characterized by the identities
\[
T_0^{\text{L}}=s(T_0^{\text{L}})\,T_0
\qquad\text{and}\qquad
T_0^{\text{R}}=T_0\,s(T_0^{\text{R}}),
\]
and it is clear that if $T_0$ is invertible then $T_0^{\text{L}}$ is
also invertible and
\[
\left(T_0^{\text{L}}\right)^{-1}=\left(T_0^{-1}\right)^{\text{R}}.
\]
Now the following proposition is clear.
\begin{proposition}
An infinite matrix $A$
admits a periodic gaussian elimination process if and only if
$A$ admits an LU-decomposition.
In fact, if $A=LU$ then the fundamental period is $T_0=s(L)L^{-1}$ and,
conversely, if $T_0$ is the fundamental period of $A$ then
\[
L^{-1}=\left(T_0\right)^{\text{L}}\text{ and }
L=\left(T_0^{-1}\right)^{\text{R}}.
\]
\end{proposition}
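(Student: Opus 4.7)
The plan is to use the fixed-point characterizations $T_0^{\text{L}} = s(T_0^{\text{L}})\,T_0$ and $T_0^{\text{R}} = T_0\,s(T_0^{\text{R}})$ that the paper states just before the proposition, together with two structural facts about $\mathcal{M}_\infty(\mathcal{A})$: (i) the set of lower triangular matrices with invertible diagonal is a subring (such matrices are row-finite, so products are defined and associative), closed under inverses; and (ii) the shift $s$ restricted to this subring is a unital ring homomorphism, so in particular $s(X)\,s(Y)=s(XY)$ and $s(X)^{-1}=s(X^{-1})$. In the setting of the proposition everything is normalized so that $T_0$ and the $L$-factor have ones on the diagonal, and invertibility is automatic.

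First I would establish uniqueness for the two fixed-point equations. If $X$ is lower triangular and $X = s(X)\,T_0$, reading the product entry-wise gives
\[
X_{1,j}=(T_0)_{1,j}, \qquad X_{i,j}=\sum_{k=\max(2,j)}^{i} X_{i-1,k-1}(T_0)_{k,j}\;\;(i\ge 2),
\]
the sums being finite because both $X$ and $T_0$ are lower triangular. Induction on $i$ shows that $X$ is uniquely determined by $T_0$, so $T_0^{\text{L}}$ is the only lower triangular solution. A symmetric column-by-column argument gives the analogous uniqueness for $Y = T_0\,s(Y)$.

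For the forward implication, assume $A = LU$ is the LU-decomposition and set $T_0 := s(L)\,L^{-1}$, which is lower triangular. Using $s(L^{-1}L)=s(I)=I$,
\[
s(L^{-1})\,T_0 \;=\; s(L^{-1})\,s(L)\,L^{-1} \;=\; s(L^{-1}L)\,L^{-1} \;=\; L^{-1},
\]
so $L^{-1}$ satisfies the defining equation of $T_0^{\text{L}}$, and by uniqueness $T_0^{\text{L}} = L^{-1}$. Hence $T_0^{\text{L}}A = L^{-1}LU = U$ is upper triangular, proving that $T_0$ is a fundamental sequence. Conversely, suppose $T_0$ is a fundamental sequence for $A$, so $T_0^{\text{L}}A = U$ is upper triangular. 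Put $L := (T_0^{-1})^{\text{R}}$; the fixed-point identity $L = T_0^{-1}\,s(L)$ gives $T_0\,L = s(L)$, hence $T_0 = s(L)\,L^{-1}$. Inverting $T_0\,L = s(L)$ and using $s(L)^{-1}=s(L^{-1})$ yields $L^{-1} = s(L^{-1})\,T_0$, so by the uniqueness above $L^{-1} = T_0^{\text{L}}$. Then $A = L\,(T_0^{\text{L}}A) = LU$ is an LU-decomposition and $s(L)L^{-1} = T_0$ recovers the original fundamental period.

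The main obstacle I anticipate is purely bookkeeping: in $\mathcal{M}_\infty(\mathcal{A})$ associativity and the identities $s(XY)=s(X)s(Y)$ and $s(X)^{-1}=s(X^{-1})$ only hold under row- or column-finiteness hypotheses, so one must check at each step that every product or inverse appearing lies in the lower triangular subring. Once this verification is in place, the rest of the argument is a formal consequence of the two fixed-point characterizations and their uniqueness.
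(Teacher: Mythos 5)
Your argument is correct and follows exactly the route the paper intends: the paper states the fixed-point characterizations $T_0^{\mathrm{L}}=s(T_0^{\mathrm{L}})\,T_0$ and $T_0^{\mathrm{R}}=T_0\,s(T_0^{\mathrm{R}})$ immediately before the proposition and then declares it ``clear,'' and your proof is precisely the fleshing-out of that remark, with the uniqueness of the lower triangular fixed point and the multiplicativity of $s$ supplied explicitly. The row-/column-finiteness bookkeeping you flag is genuinely all that remains, and you handle it correctly.
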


\subsection{Relevant infinite matrices}\label{subsec.ralevant inf. matrices}
In this subsection we introduce some infinite matrices that will be
used frequently in what follows and we also collect some nice
properties that these matrices enjoy.
\begin{enumerate}[.]
\item The Vandermonde matrix: \,$V_{ij}=i^{j-1}$.
\item The diagonal matrix formed by the powers of $q \in \mathcal{A}$:
 \,$(D_q)_{ij}=\delta_{ij}\;q^{i}$.
\item The diagonal matrix formed by the factorial numbers:
 \,$F_{ij}=\delta_{ij}\;(i-1)!$.
\item The lower triangular matrix formed by the Pascal numbers:
 \,$P_{ij}=\tbinom{i-1}{j-1}$.
\item The lower triangular matrix formed by the Stirling numbers of the second kind:
 \,$S_{ij}=S(i,j)$ for $i\ge j$, where
$S(i,j)=\textstyle\frac1{j!}\sum_{k=0}^j(-1)^k\tbinom{j}{k}(j-k)^i$.
\end{enumerate}

\

These matrices are,

\

\begin{tabular}{lll}
$V=
{
\left(
\begin{smallmatrix}
1\;     & 1\;    & 1\;    & 1\;    & \cdot\;\;      \\[1mm]
1\;     & 2\;    & 4\;    & 8\;    & \cdot\;\;      \\[1mm]
1\;     & 3\;    & 9\;    & 27\;   & \cdot\;\;      \\[1mm]
1\;     & 4\;    & 16\;   & 64\;   & \cdot\;\;      \\[1mm]
 .\;    & .    & .    & .  \\
\end{smallmatrix}
\right)
}$,
&
$D_q=
{
\left(
\begin{smallmatrix}
q     &         &         &         &         \\
      & q^2     &         &         &          \\
      &         & q^3     &         &           \\
      &         &         &q^4      &           \\
      &         &         &         & \cdot
\end{smallmatrix}
\right),
}$
&
$F=
{
\left(
\begin{smallmatrix}
1     &         &         &         &         \\
      & 1       &         &         &          \\
      &         & 2       &         &           \\
      &         &         &6        &           \\
      &         &         &         & \cdot
\end{smallmatrix}
\right),
}$
                                                    \\[1cm]
$P=
{
\left(
\begin{smallmatrix}
1\;     &        &        &        & \cdot\;\;      \\[1mm]
1\;     & 1\;    &        &        & \cdot\;\;      \\[1mm]
1\;     & 2\;    & 1\;    &        & \cdot\;\;      \\[1mm]
1\;     & 3\;    & 3\;    & 1\;    & \cdot\;\;      \\[1mm]
.\;     & .\;    & .\;    & .\;  \\
\end{smallmatrix}
\right)
}$,
&
$S=
{
\left(
\begin{smallmatrix}
1\;     &        &        &        &        & \cdot\;\;      \\[1mm]
1\;     & 1\;    &        &        &        & \cdot\;\;      \\[1mm]
1\;     & 3\;    & 1\;    &        &        & \cdot\;\;      \\[1mm]
1\;     & 7\;    & 6\;    & 1\;    &        & \cdot\;\;      \\[1mm]
1\;     & 15\;   & 25\;   & 10\;   &1\;     & \cdot\;\;      \\[1mm]
.\;     & .\;    & .\;    & .\;  \\
\end{smallmatrix}
\right). }$
                                                    \\[1cm]
\end{tabular}

\noindent Next we collect some of the properties of these matrices:

\begin{enumerate}[(1)]
\item\label{propert:Vandermonde} The Vandermonde matrix $V$ has an $LDU$-factorization.
It is given by $V=PFS^t$.

\item $P^{-1}=D_{-1}PD_{-1}$.

\item The classical recurrence relations
\[
\tbinom{i}{j}=\tbinom{i-1}{j-1}+\tbinom{i-1}{j}
\qquad\text{ and }\qquad
S(i,j)=S(i-1,j-1)+jS(i-1,j)
\]
that define, respectively, the Pascal and Stirling numbers
correspond to the matrix identities
$P=s(P)\,T_{0,P}$
and
$S=s(S)\,T_{0,S}$
where
\[T_{0,P}=\left(
\begin{smallmatrix}
1      &     &     &      & \cdot      \\
1      &1    &     &      & \cdot      \\
       &1    &1    &      & \cdot      \\
       &     &1    &1    & \cdot      \\
.      &.    &.    &.    &
\end{smallmatrix}
\right)
\qquad\text{ and }\qquad
T_{0,S}=\left(
\begin{smallmatrix}
1      &     &     &      & \cdot      \\
1      &1    &     &      & \cdot      \\
       &2    &1    &      & \cdot      \\
       &     &3    &1    & \cdot      \\
.      &.    &.    &.    &
\end{smallmatrix}
\right).
\]
Thus $P$ and $S$ are the left iterated matrices
$P=T_{0,P}^{\text{L}}$ and
$S=T_{0,S}^{\text{L}}$.
\item\label{propert:right}
$P$ and $S$ are also right iterated matrices.
Since
$P=
\left(
\begin{smallmatrix}
1      &     &     & \cdot      \\
1      &1    &     & \cdot      \\
1      &1    &1    & \cdot      \\
.      &.    &.    &
\end{smallmatrix}
\right)s(P)$
and $S=P\,s(S)$ we have
$P=\left(
\begin{smallmatrix}
1      &     &     & \cdot      \\
1      &1    &     & \cdot      \\
1      &1    &1    & \cdot      \\
.      &.    &.    &
\end{smallmatrix}
\right)^{\text{R}}$ and
$S=P^{\text{R}}$.
\item
Combining properties  (\ref{propert:Vandermonde}) and (\ref{propert:right})
we obtain that
the Vandermonde matrix $V=PFS^t$ admits a periodic Gaussian elimination process
with fundamental sequence
$\left(
\begin{smallmatrix}
1      &     &     & \cdot      \\
1      &1    &     & \cdot      \\
1      &1    &1    & \cdot      \\
.      &.    &.    &
\end{smallmatrix}
\right)^{-1}=\left(
\begin{smallmatrix}
\;\;\,1      &      &      &      & \cdot      \\
-1     &\;\;\,1    &      &      & \cdot      \\
       &-1    &\;\;\,1    &&       \cdot      \\
       &       &-1    &\;\;\,1    & \cdot      \\
\;\;\,.    & \;\;\,.      & \;\;\,.    &
\end{smallmatrix}
\right)$.
\item\label{id:change of basis}
Let  $p\in\mathcal{M}[t]$ be a polynomial and let $p_j$ and $a_j$
be, respectively, the coefficients of $p$ in terms of the bases
$\{t^j\}$ and $\{(t)_j\}$, that is
\begin{align*}
p(t)&=p_0+p_1t+p_2t^2+p_3t^3+\dots \\
    &=a_0(t)_0+a_1(t)_1+a_2(t)_2+a_3(t)_3+\dots.
\end{align*}
Then the coefficients $a_j$ and $p_j$ are related by
\[
\left(
\begin{smallmatrix}
a_0   \\
a_1   \\
a_2   \\
.
\end{smallmatrix}
\right)=
D_{-1}S^tD_{-1}P^t
\left(
\begin{smallmatrix}
p_0   \\
p_1   \\
p_2   \\
.
\end{smallmatrix}
\right)=
D_{-1}s(S)^tD_{-1}
\left(
\begin{smallmatrix}
p_0   \\
p_1   \\
p_2   \\
.
\end{smallmatrix}
\right).
\]

\end{enumerate}

%========================================================================
\subsection{The matrix representation of the system $\eqref{eq:B}$}\label{subsec.MatrixRep}
%========================================================================

Assume that $H$ and $E$ are
arbitrary elements of $\mathcal{A}$, y particular we do not assume that $[H,E]=cE$.

Recall that $p=p_0+p_1\,t+p_2\,t^2+p_3\,t^3+\dots$ satisfies the system
\begin{equation*}
E^np(H+n)=p(H-n)E^n \quad \text{for all }\, n\in\N,
\end{equation*}
if and only if the vector $(p_0, p_1, p_2, \dots)$ is a
solution of the following linear system,
\[
\begin{array}{*{6}{l@{}}@{=\hspace{-5pt}}c*{6}{l@{}}}
  E  p_0 &+& E  p_1(H\!+\!1) &+& E  p_2(H\!+\!1)^2 &\dots&
                    & p_{0}E   &+& p_{1}(H\!-\!1) E  &+& p_{2}(H\!-\!1)^2E &\dots  \\[2mm]
  E^2p_0 &+& E^2p_1(H\!+\!2) &+& E^2p_2(H\!+\!2)^2 &\dots&
                    & p_{0}E^2 &+& p_{1}(H\!-\!2) E^2&+& p_{2}(H\!-\!2)^2E^2&\dots \\[2mm]
  E^3p_0 &+& E^3p_1(H\!+\!3) &+& E^3p_2(H\!+\!3)^2 &\dots&
                    & p_{0}E^3 &+& p_{1}(H\!-\!3) E^3&+& p_{2}(H\!-\!3)^2E^3&\dots \\[2mm]
\vdots && \vdots && \vdots &&& \vdots&& \vdots&& \vdots
\end{array}
\]
or equivalently, that
\[
{
\left(
\begin{smallmatrix}
L_E   & L_ER_{H+1}   & L_ER_{H+1}^2     & L_ER_{H+1}^3    & \dots  \\[2mm]
L_E^2 & L_E^2R_{H+2} & L_E^2R_{H+2}^2   & L_E^2R_{H+2}^3 & \dots  \\[2mm]
L_E^3 & L_E^3R_{H+3} & L_E^3R_{H+3}^2   & L_E^3R_{H+3}^3 & \dots  \\[2mm]
\vdots    & \vdots   & \vdots    & \vdots
\end{smallmatrix}
\right)
}
{
\left(
\begin{smallmatrix}
p_0       \\[3mm]
p_1       \\[3mm]
p_2       \\[3mm]
 \vdots
\end{smallmatrix}
\right)
}=
{
\left(
\begin{smallmatrix}
R_E   & R_ER_{H-1}   & R_ER_{H-1}^2    & R_ER_{H-1}^3     & \dots  \\[2mm]
R_E^2 & R_E^2R_{H-2} & R_E^2R_{H-2}^2  & R_E^2R_{H-2}^3 & \dots  \\[2mm]
R_E^3 & R_E^3R_{H-3} & R_E^3R_{H-3}^2  & R_E^3R_{H-3}^3 & \dots  \\[2mm]
\vdots    & \vdots    & \vdots    & \vdots
\end{smallmatrix}
\right)
}
{
\left(
\begin{smallmatrix}
p_0       \\[3mm]
p_1       \\[3mm]
p_2       \\[3mm]
 \vdots
\end{smallmatrix}
\right), }
\]
where the matrices involved belong to
$\mathcal{M}_\infty(\End_{\k}(\mathcal{M}))$. Moreover, the above
identity holds if and only if $M{ \left(
\begin{smallmatrix}
p_0       \\
p_1       \\
p_2       \\
 .
\end{smallmatrix}
\right) }=0$, where

\[
M=
D_{L_E}
\left(
\begin{smallmatrix}
1\;   & R_{H+1} & R_{H+1}^2   & R_{H+1}^3 & \cdot  \\[1mm]
1\;   & R_{H+2} & R_{H+2}^2   & R_{H+2}^3 & \cdot  \\[1mm]
1\;   & R_{H+3} & R_{H+3}^2   & R_{H+3}^3 & \cdot  \\[1mm]
.     & .       & .           & .
\end{smallmatrix}
\right)
-
D_{R_E}
\left(
\begin{smallmatrix}
1\;   & R_{H-1} & R_{H-1}^2   & R_{H-1}^3 & \cdot  \\[1mm]
1\;   & R_{H-2} & R_{H-2}^2   & R_{H-2}^3 & \cdot  \\[1mm]
1\;   & R_{H-3} & R_{H-3}^2   & R_{H-3}^3 & \cdot  \\[1mm]
.     & .       & .           & .
\end{smallmatrix}
\right).
\]
Now, since
$R_{H\pm i}^{j-1}=\sum_{k\ge1}\binom{j-1}{k-1}(\pm i)^{k-1}R_H^{j-k}$ it follows that
\[
\left(
\begin{smallmatrix}
1\;   & R_{H+1} & R_{H+1}^2   & R_{H+1}^3 & \cdot  \\[1mm]
1\;   & R_{H+2} & R_{H+2}^2   & R_{H+2}^3 & \cdot  \\[1mm]
1\;   & R_{H+3} & R_{H+3}^2   & R_{H+3}^3 & \cdot  \\[1mm]
.     & .       & .           & .
\end{smallmatrix}
\right)=
VD_{1}P_{R_H}^t,\qquad
\left(
\begin{smallmatrix}
1\;   & R_{H-1} & R_{H-1}^2   & R_{H-1}^3 & \cdot  \\[1mm]
1\;   & R_{H-2} & R_{H-2}^2   & R_{H-2}^3 & \cdot  \\[1mm]
1\;   & R_{H-3} & R_{H-3}^2   & R_{H-3}^3 & \cdot  \\[1mm]
.     & .       & .           & .
\end{smallmatrix}
\right)=
- VD_{-1}P_{R_H}^t,
\]
where
\[
P_x
=\Big(\tbinom{i-1}{j-1}x^{i-j}\Big)=
{
\left(
\begin{smallmatrix}
1       &      &      &      & \cdot      \\
x      & 1    &      &      & \cdot      \\
x^2    & 2x   & 1    &      & \cdot      \\
x^3    & 3x^2 & 3x   & 1    & \cdot      \\
.       & .    & .    & .  \\
\end{smallmatrix}
\right).
}
\]
Hence $M=M_0P_{R_H}^t$ where
\[
M_0={
\left(
\begin{smallmatrix}
L_E  -R_E  \;\; &  L_E  + R_E  \;\; &  L_E  - R_E  \;\;   &   L_E  +  R_E  \;\; & \cdot\;\;  \\[3mm]
L_E^2-R_E^2\;\; & 2L_E^2+2R_E^2\;\; & 4L_E^2-4R_E^2\;\;   &  8L_E^2+ 8R_E^2\;\; & \cdot\;\;  \\[3mm]
L_E^3-R_E^3\;\; & 3L_E^3+3R_E^3\;\; & 9L_E^3-9R_E^3 \;\;  & 27L_E^3+27R_E^3\;\; & \cdot\;\;  \\[3mm]
      .         &       .           &       .             &       .
\end{smallmatrix}
\right).
}
\]

It is also convenient to consider the Pochhammer basis $\{(t)_j\}$
of $\k[t]$, instead of the basis $\{t^j\}$, to express the system
\eqref{eq:B}. If we do this, an analysis similar to
the previous one shows that
$p=a_0(t)_0+a_1(t)_1+a_2(t)_2+\dots\in{\mathcal{M}}[t]$ satisfies the system
\eqref{eq:B} if and only if
$M'{ \left(
\begin{smallmatrix}
a_0       \\
a_1       \\
a_2       \\
 .
\end{smallmatrix}
\right) }=0$, where
\[
M'=(M'_1-M'_2)F(P'_{R_H})^t,
\]
and the matrices $M'_1$, $M'_2$ and $P'_x$ are as follows,
$$P'_x
=\Big(\tbinom{i-1}{j-1}(x)_{i-j}\Big)=
{
\left(
\begin{smallmatrix}
1\;\;    &          &          &      & \cdot      \\[1.5mm]
(x)_1    & 1\;\;    &          &      & \cdot      \\[1.5mm]
(x)_2    & 2(x)_1   & 1\;\;    &      & \cdot      \\[1.5mm]
(x)_3    & 3(x)_2   & 3(x)_1   & 1\;\;& \cdot      \\[1.5mm]
.        & .    & .    & .  \\
\end{smallmatrix}
\right),
}
$$

\[
M'_1=D_{L_E}
\left(
\begin{smallmatrix}
1 \;\; & 1  \;\; & 1   \;\;  & 1        \;\; & \cdot\;\;  \\[1mm]
1 \;\; & 2  \;\; & 3   \;\;  &  4       \;\; & \cdot\;\;  \\[1mm]
1 \;\; & 3  \;\; & 6   \;\;  & 10       \;\; & \cdot\;\;  \\[1mm]
1 \;\; & 4  \;\; & 10  \;\;  & 20       \;\; & \cdot\;\;  \\[1mm]
      .         &       .           &       .             &       .
\end{smallmatrix}
\right) \quad \text{ and } \quad M'_2=D_{R_E} \left(
\begin{smallmatrix}
\;\;1    & -1   & \;\;     &       & \;\;     & \cdot      \\[1mm]
\;\;1    & -2   & \;\;1    &       & \;\;     & \cdot      \\[1mm]
\;\;1    & -3   & \;\;3    & -1    & \;\;     & \cdot      \\[1mm]
\;\;1    & -4   & \;\;6    & -4    & \;\;1    & \cdot      \\[1mm]
.       & .    & .    & .  \\
\end{smallmatrix}
\right).
\]
\begin{remark}
According to item \eqref{id:change of basis} of \S\ref{subsec.ralevant inf. matrices},
the matrices $M$ and $M'$ are related by
\[
M=M'D_{-1}s(S)^tD_{-1}.
\]
Thus, the matrix $M'$ could be seen as the result of factoring out from $M$
the Stirling numbers that the Vandermonde matrix (appearing in $M$) contains.
\end{remark}

%========================================================================
\section{The LU-decomposition of the system $\eqref{eq:B}$}\label{subsec:LU}
%========================================================================

%========================================================================
\subsection{The L and U factors}\label{subsec:L and U factors}
%========================================================================
We
have just seen that $M$ is the matrix corresponding to system \eqref{eq:B} in the canonical basis
and $M'$ is the  corresponding matrix in the Pochhammer basis.
We also have
\[
M=M_0P_{R_H}^t,\qquad
M'=(M'_1-M'_2)F(P'_{R_H})^t,\qquad
M=M'D_{-1}s(S)^tD_{-1}.
\]
Therefore, in order to find the
LU-decomposition of the system \eqref{eq:B} it is
sufficient to find the LU-decomposition of either $M_0$ or $M'_1-M'_2$.
Moreover, the entries of $M_0$ and $M'_1-M'_2$ are homogeneous polynomials
in the variables $L_{E}$ and $R_{E}$.
Since an homogeneous polynomial $q(x_1,x_2)$ in two variables
$x_1$ and $x_2$ is completely determined by the polynomial
$\tilde q(x)=q(x,1)$, we shall simplify the notation by substituting
the variables
\begin{equation}\label{eq:change}
L_{E}\text{ by }x\qquad\text{ and }\qquad R_{E}\text{ by }1.
\end{equation}
Under this transformations, the matrices $M_0$,  $M'_1$ and $M'_2$
become
\[
\tilde M_0=\left(
\begin{smallmatrix}
x   - 1   \;&\; x    + 1    \;&\; x    -  1   \;&\;  x   +   1   &\;\; \cdot  \\[3mm]
x^2 - 1   \;&\; 2x^2 + 2    \;&\; 4x^2 - 4    \;&\; 8x^2 +  8    &\;\; \cdot  \\[3mm]
x^3 - 1   \;&\; 3x^3 + 3    \;&\; 9x^3 - 9    \;&\;27x^3 + 27    &\;\; \cdot  \\[3mm]
.           & .         & .         & .
\end{smallmatrix}
\right),
\]
\[
\tilde M'_1=
D_x\left(
\begin{smallmatrix}
1 \;\; &  1    \;\; &  1     \;\;  &   1         \;\; & \cdot\;\;  \\[1mm]
1 \;\; & 2   \;\; & 3    \;\;  &  4        \;\; & \cdot\;\;  \\[1mm]
1 \;\; & 3   \;\; & 6    \;\;  & 10        \;\; & \cdot\;\;  \\[1mm]
1 \;\; & 4   \;\; & 10   \;\;  & 20        \;\; & \cdot\;\;  \\[1mm]
      .         &       .           &       .             &       .
\end{smallmatrix}
\right)
\quad \text{ and } \quad
\tilde M'_2=
\left(
\begin{smallmatrix}
\;\;1    & -1   & \;\;     &       & \;\;     & \cdot      \\[1mm]
\;\;1    & -2   & \;\;1    &       & \;\;     & \cdot      \\[1mm]
\;\;1    & -3   & \;\;3    & -1    & \;\;     & \cdot      \\[1mm]
\;\;1    & -4   & \;\;6    & -4    & \;\;1    & \cdot      \\[1mm]
.        & .    & .        & .     & .  \\
\end{smallmatrix}
\right).
\]
We point out that the LU-decompositions of the matrices
$\tilde M'_1$ and $\tilde M'_2$, separately, are
\[
\tilde M'_1=D_xPP^t \quad \text{ and } \quad
\tilde M'_2=P\left(
\begin{smallmatrix}
-1   & \;\;1 &      &      &       & \cdot      \\[1mm]
     & \;\;1 & -1   &      &       & \cdot      \\[1mm]
     & \;\;  & -1   & \;\;1&       & \cdot      \\[1mm]
     & \;\;  &      & \;\;1& -1    & \cdot      \\[1mm]
\;.  & \;\;. & \;.  & \;\;.& \;\;.  \\
\end{smallmatrix}
\right).
\]
The LU-decomposition of
$\tilde M'_1-\tilde M'_2$ is more subtle and it is obtained from Theorem \ref{thm:Eqinfinity}
as follows:
\begin{enumerate}[(1)]

\item $(\tilde M'_1-\tilde M'_2)F$ is the matrix corresponding to the system
\eqref{eq:B} for $H=0$, when it is expressed in terms of the basis $\{(t)_j\}$.

\item In other words, $(\tilde M'_1-\tilde M'_2)F$ is the matrix corresponding to the system
$\Eq^0$ for $H=0$, when it is expressed in terms of the basis $\{(t)_j\}$.

\item\label{item.T0}
The definition of the system $\Eq^k$ and the Recursion Formula
\ref{prop:recursion} imply that the system $\Eq^k$ is obtained by
keeping the first $k$ equations of the system $\Eq^{k-1}$ and by
incorporating the equations
\[
\eq^{k}_i=
\eq^{k-1}_i-2R_E(\eq^{k-1}_{i-1})+R_E^2(\eq^{k-1}_{i-2})
-\ad_E(\eq^{k-1}_{i-1})+R_E\ad_E(\eq^{k-1}_{i-2})
\]
for $i> k> 0$.
Applying the change of variables \eqref{eq:change} these equations are transformed
as follows,
\[
\eq^{k}_i=
\eq^{k-1}_i-\, (x+1)\eq^{k-1}_{i-1}\, +\, x\eq^{k-1}_{i-2}.
\]
This means that the lower triangular matrix that transforms
the system $\Eq^{k-1}$ into the system $\Eq^k$ is $s^{k-1}(T_0)$ with
\begin{equation}\label{eq:L0}
T_0=
\left(
\begin{smallmatrix}
1       &        &         &          & \cdot      \\
-x-1    &1       &         &          & \cdot      \\
x       &-x-1    &1        &          & \cdot      \\
        &x       &-x-1    &1          & \cdot      \\
.       & .      & .      & .
\end{smallmatrix}
\right).
\end{equation}

\item  Theorem \ref{thm:Eqinfinity} implies that the left iterated matrix $T_0^L$
associated to $T_0$ transforms the system $\Eq^{0}$ into the system $\Eq^\infty$.

\item\label{item:LU}
Let
\[
\tilde M'_1-\tilde M'_2=\tilde L'\tilde U'
\]
be the LU-decomposition of $\tilde M'_1-\tilde M'_2$.
According to the previous item, we have that
$\tilde L'=(T_0^L)^{-1}=(T_0^{-1})^R$ and
$\tilde U'$ is of the form $\tilde U'_1-\tilde U'_2$
where $\tilde U'_1F$ (respectively, $\tilde U'_2F$) is the matrix corresponding to the left
(respectively, right) hand side of the system $\Eq^\infty$ for $H=0$
(and thus $c=0$), when the system is expressed in terms of the basis
$\{(t)_j\}$.

\item According to Corollary \ref{coro:c=0,c=1}, when $H=0$, the $i$th equation of
$\Eq^\infty$ is
\begin{align*}
\eq_{i}^{i-1}:\sum_{r=0}^{i-1} \Big[\tbinom{i-1}{r}\,
\ad_E^{i-r}\partial_1^{i+r-1}\;p(i)&E^{r}
+\tbinom{i-1}{r}\ad_E^{i-r-1}\partial_1^{i+r-1}\;p(i)E^{r+1}\Big] \\
=&\sum_{r=0}^{i-1}\tbinom{i-1}{r}\,
\ad_E^{i-r-1}\partial_1^{i+r-1}\;p(i-2)E^{r\!+\!1}.
\end{align*}
Now, replacing $\ad_E$ by $(x-1)$ and $E$  by $1$ we obtain,
\begin{multline*}
\eq_{i}^{i-1}:\sum_{r=0}^{i-1}\tbinom{i-1}{r}\partial_1^{i+r-1}\;p(i)x(x-1)^{i-r-1} \\
=\sum_{r=0}^{i-1}\tbinom{i-1}{r}\,
\partial_1^{i+r-1}\;p(i-2)(x-1)^{i-r-1}.
\end{multline*}

\item
If $p=a_0(t)_0+a_1(t)_1+a_2(t)_2+\dots$ then, applying \eqref{eq:Pochhammer}, we obtain
\begin{multline*}
\eq_{i}^{i-1}:\sum_{j\ge 0}\sum_{r=0}^{j-i+1} \tbinom{i-1}{r}\,
(j-i-r+2)_{i+r-1}(i)_{j-i-r+1}x(x-1)^{i-r-1}a_j \\
=\sum_{j\ge 0}\sum_{r=0}^{j-i+1}\tbinom{i-1}{r}\,
(j-i-r+2)_{i+r-1}(i-2)_{j-i-r+1}(x-1)^{i-r-1}a_j.
\end{multline*}

\item\label{item:U} Now, the $(i,j)$ entry of the matrix $\tilde U'_1F$ (respectively, $\tilde U'_2F$) is
the coefficient multiplying $a_{j-1}$ in the left (respectively,
right) hand side of the $i$th equation the system  $\Eq^\infty$
given in (7), that is
\begin{align*}
(\tilde U'_1)_{ij}&=\frac{1}{(j-1)!}
 \sum_{r=0}^{j-i}\tbinom{i-1}{r}\,(j-i-r+1)_{i+r-1}(i)_{j-i-r}(x-1)^{i-r-1}x, \\
(\tilde U'_2)_{ij}&=\frac{1}{(j-1)!}
\sum_{r=0}^{j-i}\tbinom{i-1}{r}\,(j-i-r+1)_{i+r-1}(i-2)_{j-i-r}(x-1)^{i-r-1}. \notag
\end{align*}
\end{enumerate}

We shall now complete the description of the
LU-decomposition of the system \eqref{eq:B} by showing
that $\tilde L'$, $(\tilde L')^{-1}$, $\tilde U'_1$ and $\tilde U'_2$ are matrices whose
entries are given by some particular Jacobi polynomials.
The basic facts about Jacobi polynomials
that we shall need are collected in \S\ref{sec:Jacobi}.

%---------------------------------------------------------------------------------------------
\subsection{The entries of $L$ and $L^{-1}$ are ultraspherical Jacobi polynomials}
%---------------------------------------------------------------------------------------------
Recall that according to item\eqref{item:LU} of \S\ref{subsec:L and U factors}
we know that $\tilde L'=(T_0^{-1})^{\text{R}}$
and $(\tilde L')^{-1}=T_0^{\text{L}}$.
It is easy to see that the inverse of
\begin{equation*}
T_0=
\left(
\begin{smallmatrix}
1       &        &         &          & \cdot      \\
-x-1    &1       &         &          & \cdot      \\
x       &-x-1    &1        &          & \cdot      \\
        &x       &-x-1    &1          & \cdot      \\
.       & .      & .      & .
\end{smallmatrix}
\right)
\end{equation*}
is the lower triangular matrix
with entries $\left(T_0^{-1}\right)_{ij}=\frac{x^{i-j+1}-1}{x-1}$ if $i\ge j$, that is
\[
T_0^{-1}=
\left(
\begin{smallmatrix}
1          &        &         &          & \cdot      \\[1mm]
1+x        &1       &         &          & \cdot      \\[1mm]
1+x+x^2    &1+x     &1        &          & \cdot      \\[1mm]
1+x+x^2+x^3&1+x+x^2 &1+x      &1         & \cdot      \\[1mm]
.          & .      & .       & .
\end{smallmatrix}
\right).
\]

\begin{theorem}\label{thm:L0}
The entries of $\tilde L'$ and its inverse $(\tilde L')^{-1}$
are given by the following ultraspherical Jacobi polynomials.
\begin{align*}
(\tilde L')_{ij}
&=(-1)^{i-j}\;p_{i-j}^{-i,-i}(x), \\[2mm]
\left((\tilde L')^{-1}\right)_{ij}
&=(-1)^{i-j}\;\frac{j}{i}\;p_{i-j}^{j,j}(x).
\end{align*}
\end{theorem}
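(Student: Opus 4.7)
The approach is to exploit the characterizations of $\tilde L'$ and its inverse as iterated matrices recorded in item~(5) of \S\ref{subsec:L and U factors}: namely, $\tilde L' = (T_0^{-1})^{\mathrm R}$, which gives $T_0\,\tilde L' = s(\tilde L')$, and $(\tilde L')^{-1} = T_0^{\mathrm L}$, which gives $(\tilde L')^{-1} = s((\tilde L')^{-1})\,T_0$. I would introduce the candidate matrices
\[
A_{ij} := (-1)^{i-j}\,p_{i-j}^{-i,-i}(x), \qquad
B_{ij} := (-1)^{i-j}\,\tfrac{j}{i}\,p_{i-j}^{j,j}(x)
\]
for $i\ge j\ge 1$ (and zero otherwise), and then show that $A$ satisfies the defining matrix identity of $\tilde L'$ and $B$ the defining identity of $(\tilde L')^{-1}$; uniqueness of the iterated matrices then forces $A=\tilde L'$ and $B=(\tilde L')^{-1}$.

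Unfolding entry-wise, the matrix identities $T_0 A = s(A)$ and $B = s(B)\,T_0$ become, for $i\ge 2$, the three-term recurrences
\begin{align*}
A_{ij} - (x+1)\,A_{i-1,j} + x\,A_{i-2,j} &= A_{i-1,j-1}, \\
B_{ij} &= B_{i-1,j-1} - (x+1)\,B_{i-1,j} + x\,B_{i-1,j+1},
\end{align*}
together with boundary data on the first row and first column. The boundary data are easy: for $A$ one checks $p_{i-1}^{-i,-i}(x) = (-1)^{i-1}(1+x+\cdots+x^{i-1})$, which follows from the hypergeometric formula for $p_n^{\alpha,\beta}$ stated in the introduction (and matches the geometric-sum first column of $T_0^{-1}$); for $B$ one needs only $p_0^{\alpha,\alpha}(x)=1$, immediate from the definition.

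The crux of the argument is verifying the two recurrences themselves. After substituting the formulas for $A$ and $B$ and cancelling the common signs, they translate into \emph{contiguous relations} for ultraspherical Jacobi polynomials in which the degree $n$ and the parameter $\alpha$ are shifted simultaneously; for $A$ the content is the identity
\[
p_n^{-(\alpha-1),\,-(\alpha-1)}(x) - p_n^{-\alpha,\,-\alpha}(x)
= (x+1)\,p_{n-1}^{-(\alpha-1),\,-(\alpha-1)}(x) + x\,p_{n-2}^{-(\alpha-2),\,-(\alpha-2)}(x),
\]
and for $B$ a weighted analogue linking $p_n^{\alpha-1,\alpha-1}$, $p_{n-1}^{\alpha,\alpha}$, and $p_{n-2}^{\alpha+1,\alpha+1}$. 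These are not textbook identities, and they form the technical heart of the proof. I would establish them by passing to the variable $u=(x+1)/(x-1)$, rewriting both sides in terms of the classical $P_n^{\alpha,\alpha}(u)$, and then assembling the identity from the elementary contiguous and parameter-shift relations for Jacobi polynomials collected in \S\ref{sec:Jacobi}. Some care is needed because of the negative integer parameters involved, but the computation is routine in spirit.
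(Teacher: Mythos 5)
Your proposal is correct. For the inverse $(\tilde L')^{-1}$ it is the same argument as the paper's: verify $B=s(B)\,T_0$ entrywise and reduce the resulting four-term relation among the ultraspherical polynomials to a combination of \eqref{id:4} and \eqref{id:3.1}. For $\tilde L'$ itself you take a genuinely different route. The paper checks the identity in the form $K'=T_0^{-1}\,s(K')$; because $T_0^{-1}$ is a full lower-triangular matrix of geometric sums, this unfolds into a summation over all rows $k$ with $j\le k\le i$, and is settled by Lemma \ref{lemma:ALInv}, a telescoping identity proved by induction. You instead check the equivalent banded form $T_0A=s(A)$ (legitimate: $T_0$ is unipotent lower triangular, so the two fixed-point equations determine the same unique matrix), which reduces to the single contiguous relation
\[
P_n^{\beta,\beta}(u)+u\,P_{n-1}^{\beta+1,\beta+1}(u)+\tfrac{u^2-1}{4}\,P_{n-2}^{\beta+2,\beta+2}(u)=P_n^{\beta+1,\beta+1}(u),
\qquad \beta=-i,\quad n=i-j,\quad u=\tfrac{x+1}{x-1},
\]
which is exactly your stated identity for $A$ after clearing the factor $(x-1)^n$. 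This relation is not listed in the appendix, but it follows in two lines from \eqref{id:4} (with $\alpha=\beta+2$ and $n$ replaced by $n-1$) together with \eqref{id:3.1} (with $\alpha=\beta+1$); the division by $n+\beta+1=1-j$ that this requires is harmless precisely for $j\ge2$, while $j=1$ is your geometric-sum boundary column, so the case split closes. Your variant is more uniform --- both halves of the theorem become parallel banded recurrences driven by $T_0$ --- and it dispenses with Lemma \ref{lemma:ALInv} entirely; the paper's variant pays for the inductive lemma but obtains that summation identity (a finite expansion of $P_n^{\alpha,\alpha}$ in shifted-parameter ultraspherical polynomials) as a by-product of independent interest. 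The one item you must still supply is the explicit derivation of the displayed contiguous relation, which in your write-up is asserted rather than proved; as indicated, it is available from the identities already recorded in \S\ref{sec:Jacobi}, which remain valid for negative integer parameters because they are polynomial identities in $\alpha$.
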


\begin{proof}
Let $K'$ and $K''$ be the matrices defined by the
right hand sides of the above equalities, that is
\[
K'_{ij}=(-1)^{i-j}\;p_{i-j}^{-i,-i}(x)\qquad\text{and}\qquad
K''_{ij}=(-1)^{i-j}\;\frac{j}{i}\;p_{i-j}^{j,j}(x).
\]
By the definition of left and right iterated matrices, it is sufficient to prove that
\[
K'=T_0^{-1}\,s(K')
\qquad\text{and}\qquad
K''=s(K'')\,T_0.
\]
The first identity holds if and only if
\[
(-1)^{i-j}\;p_{i-j}^{-i,-i}(x) = \sum_{k=j}^i
\dfrac{x^{i-k+1}-1}{x-1}\;(-1)^{k-j}\;p_{k-j}^{-k+1,-k+1}(x),
\]
or equivalently, if the following equality holds
\[
p_{i-j}^{-i,-i}(x)
=
\sum_{k=0}^{i-j} \dfrac{x^{i-j-k+1}-1}{x-1}\;(-1)^{k+j-i}\;p_{k}^{-k-j+1,-k-j+1}(x),
\]
which in turn holds if and only if
\[
P_{i-j}^{-i,-i}\left(\tfrac{x+1}{x-1}\right) = \sum_{k=0}^{i-j}
\left(1-x^{i-j-k+1}\right)\;
(1-x)^{k+j-i-1}P_{k}^{-k-j+1,-k-j+1}\left(\tfrac{x+1}{x-1}\right).
\]
Now, this last identity is equivalent to
\[
P_{i-j}^{-i,-i}(x)
=
\sum_{k=0}^{i-j+1}
\left(\left(\tfrac{1-x}2\right)^{i-j-k+1}-\left(\tfrac{-1-x}2\right)^{i-j-k+1}\right)\;
P_{k}^{-k-j+1,-k-j+1}(x),
\]
and this last equality is true, since it is the difference of the
two identities stated in Lemma \ref{lemma:ALInv} for $n=i-j+1$ and
$\alpha=i$. This completes the proof of the identity
$K'=T_0^{-1}\,s(K').$

The second identity holds if and only if
\[
(-1)^{i-j}\;\frac{j}{i}\;p_{i-j}^{j,j}(x)
=
\sum_{k=j}^i (-1)^{i-k}\;\frac{k-1}{i-1}\;p_{i-k}^{k-1,k-1}(x)\;(T_0)_{kj}
\]
for $i\ge2$, or equivalently if
\[
\frac{j}{i}\;p_{i-j}^{j,j}(x)
=
\frac{j-1}{i-1}\;p_{i-j}^{j-1,j-1}(x)+
\frac{j(x+1)}{i-1}\;p_{i-j-1}^{j,j}(x)+
\frac{(j+1)x}{i-1}\;p_{i-j-2}^{j+1,j+1}(x),
\]
which in turn holds if and only if
\begin{multline*}
\frac{j}{i}\;P_{i-j}^{j,j}\left(\tfrac{x+1}{x-1}\right) \\
=
\tfrac{j-1}{i-1}\;P_{i-j}^{j-1,j-1}\left(\tfrac{x+1}{x-1}\right)+
\tfrac{j}{i-1}\tfrac{x+1}{x-1}\;P_{i-j-1}^{j,j}\left(\tfrac{x+1}{x-1}\right)+
\tfrac{j+1}{i-1}\tfrac{x}{(x-1)^2}\;P_{i-j-2}^{j+1,j+1}\left(\tfrac{x+1}{x-1}\right),
\end{multline*}
or equivalently, if the following equality holds
\[ 4(i-1)j\;P_{i-j}^{j,j}(x) =
4i(j-1)\;P_{i-j}^{j-1,j-1}(x)+ 4ijx\;P_{i-j-1}^{j,j}(x)+
i(j+1)(x^2-1)\;P_{i-j-2}^{j+1,j+1}(x).
\]
Now, this last identity is true since it is $(1+j)$ times equation
\eqref{id:4} with $n=i-j-1$ and $\alpha=j+1$, plus $2(j-1)$ times
equation \eqref{id:3.1} with $n=i-j$ and $\alpha=j$. This completes
the proof of the identity $K''=s(K'')\,T_0.$
\end{proof}

\begin{remark}
It is known that the ultraspherical Jacobi polynomials constitute a
2-parameter family of polynomials that are orthogonal with respect
to a continuous measure. The above theorem provides the following
``discrete orthogonality'' relationship that involves once many of them,
\[
\left(
\begin{smallmatrix}
\frac11 p_0^{1,1} &                  &                 &                  & \cdot   \\[1mm]
\frac12 p_1^{1,1} &\frac22 p_0^{2,2} &                 &                  & \cdot   \\[1mm]
\frac13 p_2^{1,1} &\frac23 p_1^{2,2} &\frac33 p_0^{3,3}&                  & \cdot   \\[1mm]
\frac14 p_3^{1,1} &\frac24 p_2^{2,2} &\frac34 p_1^{3,3}&\frac11 p_0^{4,4} & \cdot   \\[1mm]
.                 & .                & .               & .
\end{smallmatrix}
\right)
\left(
\begin{smallmatrix}
p_0^{-1,-1} &             &            &             & \cdot   \\[1.23mm]
p_1^{-2,-2} & p_0^{-2,-2} &            &             & \cdot   \\[1.23mm]
p_2^{-3,-3} & p_1^{-3,-3} & p_0^{-3,-3}&             & \cdot   \\[1.23mm]
p_3^{-4,-4} & p_2^{-4,-4} & p_1^{-4,-4}& p_0^{-4,-4} & \cdot   \\[1.23mm]
.           & .           & .          & .
\end{smallmatrix}
\right)=
\left(
\begin{smallmatrix}
1\:\: &    \:\:&  \:\:   &  \:\: & \cdot    \\[2.20mm]
 \:\: & 1  \:\:&  \:\:   &  \:\: &  \cdot   \\[2.20mm]
 \:\: &    \:\:& 1\:\:   &  \:\: &  \cdot   \\[2.20mm]
 \:\: &    \:\:&  \:\:   & 1\:\: &  \cdot   \\[2.20mm]
.\:\: & .  \:\:& .\:\:   & .
\end{smallmatrix}
\right).
\]
\end{remark}

%-----------------------------------------------------------------------------
\subsection{The entries of $\tilde U'_1$ and $\tilde U'_2$ are Jacobi polynomials}
%-----------------------------------------------------------------------------

\begin{theorem}
The entries of $\tilde U'_1$ and $\tilde U'_2$
are given by the following Jacobi polynomials.
\begin{align*}
\left(\tilde U'_1\right)_{ij}
&=(-1)^{j-i} \,(x-1)^{2i-j-1}\,x\,p_{j-i}^{-j,0}(x), \\[2mm]
\left(\tilde U'_2\right)_{ij}
&=(-1)^{j-i} \,(x-1)^{2i-j-1}\,p_{j-i}^{-j+2,-2}(x).
\end{align*}
\end{theorem}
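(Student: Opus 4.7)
The plan is to evaluate $(\tilde U'_1)_{ij}$ and $(\tilde U'_2)_{ij}$ directly from the formulas in item (8) of \S\ref{subsec:L and U factors} and match them with the claimed Jacobi polynomial expressions via a power-series expansion in $(x-1)$.

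First, I would simplify the defining sums. Using the Pochhammer identifications $(j-i-r+1)_{i+r-1}=(j-1)!/(j-i-r)!$, $(i)_{j-i-r}/(j-i-r)!=\binom{j-r-1}{i-1}$ and $(i-2)_{j-i-r}/(j-i-r)!=\binom{j-r-3}{j-i-r}$ (interpreting binomial coefficients with possibly negative top entry via $\binom{n}{k}=n(n-1)\cdots(n-k+1)/k!$), the $1/(j-1)!$ prefactor cancels and the defining sums become
\[
(\tilde U'_1)_{ij}=x\sum_{r=0}^{j-i}\binom{i-1}{r}\binom{j-r-1}{i-1}(x-1)^{i-r-1},\quad (\tilde U'_2)_{ij}=\sum_{r=0}^{j-i}\binom{i-1}{r}\binom{j-r-3}{j-i-r}(x-1)^{i-r-1}.
\]
After the substitution $s=j-i-r$, these become
\[
(\tilde U'_1)_{ij}=x(x-1)^{2i-j-1}\sum_{s}\binom{i-1}{j-i-s}\binom{i+s-1}{s}(x-1)^{s}
\]
and
\[
(\tilde U'_2)_{ij}=(x-1)^{2i-j-1}\sum_{s}\binom{i-1}{j-i-s}\binom{i+s-3}{s}(x-1)^{s}.
\]

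Next, I would expand the claimed Jacobi polynomial expressions as power series in $(x-1)$. Combining $p_n^{\alpha,\beta}(x)=(x-1)^n P_n^{\alpha,\beta}(\tfrac{x+1}{x-1})$ with the hypergeometric form
\[
P_n^{\alpha,\beta}(z)=\tfrac{(\alpha+1)_n}{n!}\,{}_2F_1\!\left(-n,\,n+\alpha+\beta+1;\,\alpha+1;\,\tfrac{1-z}{2}\right)
\]
and the substitution $(1-z)/2=-1/(x-1)$, a direct conversion of Pochhammer symbols to factorials (via $(-n)_k=(-1)^k n!/(n-k)!$ and $(1-i)_k=(-1)^k(i-1)!/(i-k-1)!$) shows that, with $n=j-i$,
\[
(-1)^n p_n^{-j,0}(x)=\sum_s\binom{i-1}{n-s}\binom{i+s-1}{s}(x-1)^s
\]
and
\[
(-1)^n p_n^{-j+2,-2}(x)=\sum_s\binom{i-1}{n-s}\binom{i+s-3}{s}(x-1)^s.
\]
Matching with the sums from the first step then establishes both identities simultaneously.

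The main obstacle is the careful handling of boundary cases. When $j>2i-1$ the exponent $2i-j-1$ is formally negative; one must check that the sum over $s$ provides enough factors of $(x-1)$ to absorb it, which happens automatically because $\binom{i-1}{n-s}$ vanishes for $s<n-i+1$, precisely removing the terms that would contribute negative powers of $(x-1)$. Likewise, the first-step reductions require interpreting binomials with arbitrary integer top argument; in particular, the expansion of $(\tilde U'_2)_{ij}$ for $i\in\{1,2\}$ involves $\binom{i+s-3}{s}$ with a negative top, but the identities remain valid row by row, as one can verify directly since each such row has only finitely many nonzero entries. Beyond these bookkeeping care points, the whole argument is a direct specialization of the standard hypergeometric expansion of $P_n^{\alpha,\beta}$.
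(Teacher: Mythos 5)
Your proposal is correct and is essentially the paper's own argument: both proofs verify the identities by a direct Pochhammer-symbol manipulation of the defining sums from item (8) and a term-by-term match against the hypergeometric (explicit sum) representation of the Jacobi polynomial. The only difference is cosmetic — the paper substitutes $x\mapsto\tfrac{x+1}{x-1}$ so as to recognize $P_{j-i}^{-j,0}$ and $P_{j-i}^{-j+2,-2}$ in powers of $\tfrac{x-1}{2}$, while you expand $p_{j-i}^{-j,0}$ and $p_{j-i}^{-j+2,-2}$ directly in powers of $x-1$; your extra care with the degenerate parameters (where $(\alpha+1)_k$ in the denominator of the ${}_2F_1$ form can vanish) is a sensible precaution that the paper sidesteps by working with the denominator-free sum definition of $P_n^{\alpha,\beta}$.
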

\begin{proof}
The theorem will be proved if we show that
\begin{align*}
\tilde U'_1\left(\tfrac{x+1}{x-1}\right)_{ij}&=(-1)^{j-i}\,
\left(\tfrac{x-1}{2}\right)^{1-i}\,\tfrac{x+1}{x-1}\, P_{j-i}^{-j,0}(x), \\
\intertext{and}
\tilde U'_2\left(\tfrac{x+1}{x-1}\right)_{ij}&=(-1)^{j-i}\,
\left(\tfrac{x-1}{2}\right)^{1-i}\,P_{j-i}^{-j+2,-2}(x).
\end{align*}
In fact, according to item \eqref{item:U}
\begin{align*}
\tilde U'_1&\left(\tfrac{x+1}{x-1}\right)_{ij}=
 \tfrac{x+1}{2(j-1)!}\sum_{r=0}^{j-i}\tbinom{i-1}{r}\,(j-i-r+1)_{i+r-1}(i)_{j-i-r}
                            \left(\tfrac{x-1}2\right)^{r-i} \\
&=
\tfrac{x+1}2\left(\tfrac{x-1}2\right)^{-i}\tfrac{1}{(j-i)!}
\sum_{r=0}^{j-i}\tbinom{j-i}{r}\,(i-r)_{r}(i)_{j-i-r}\left(\tfrac{x-1}2\right)^{r}  \\
&=
\tfrac{x+1}2\left(\tfrac{x-1}2\right)^{-i}\tfrac{(-1)^{j-i}}{(j-i)!}
\sum_{r=0}^{j-i}\tbinom{j-i}{r}\,(1-i)_{r}(1-j+r)_{j-i-r}\left(\tfrac{x-1}2\right)^{r}  \\
&=
(-1)^{j-i}\left(\tfrac{x-1}2\right)^{1-i}
\tfrac{x+1}{x-1}P_{j-i}^{-j,0}(x).
\end{align*}
Analogously,
\begin{align*}
\tilde U'_2&\left(\tfrac{x+1}{x-1}\right)_{ij}=\tfrac1{(j-1)!}
\sum_{r=0}^{j-i}\tbinom{i-1}{r}\,(j-i-r+1)_{i+r-1}(i-2)_{j-i-r}
                    \left(\tfrac{x-1}2\right)^{r-i+1} \\
&=\left(\tfrac{x-1}2\right)^{1-i}\tfrac{1}{(j-i)!}
\sum_{r=0}^{j-i}\tbinom{j-i}{r}\,(i-r)_{r}(i-2)_{j-i-r}\left(\tfrac{x-1}2\right)^{r}  \\
&=\left(\tfrac{x-1}2\right)^{1-i}\tfrac{(-1)^{j-i}}{(j-i)!}
\sum_{r=0}^{j-i}\tbinom{j-i}{r}\,(1-i)_{r}(3-j+r)_{j-i-r}\left(\tfrac{x-1}2\right)^{r}  \\
&=
(-1)^{j-i}\left(\tfrac{x-1}2\right)^{1-i}P_{j-i}^{-j+2,-2}(x).
\end{align*}
This completes the proof of the theorem.
\end{proof}
We have therefore proved the following theorem
\begin{theorem}
The LU-decomposition of
\[
\tilde M'_1-\tilde M'_2=\left(
\begin{smallmatrix}
x   - 1   \;&\; x    + 1    \;&\; x       \;&\;  x      &\;\; \cdot  \\[3mm]
x^2 - 1   \;&\; 2x^2 + 2    \;&\; 3x^2 - 1    \;&\; 4x^2     &\;\; \cdot  \\[3mm]
x^3 - 1   \;&\; 3x^3 + 3    \;&\; 6x^3 - 3    \;&\;10x^3 + 1    &\;\; \cdot  \\[3mm]
.           & .         & .         & .
\end{smallmatrix}
\right)
\]
is
$
\tilde M'_1-\tilde M'_2=\tilde L'\tilde U'
$
where
\[
\begin{array}{rclr}
\displaystyle (\tilde L')_{ij}&=&\displaystyle (-1)^{i-j}\;p_{i-j}^{-i,-i}(x), &i\ge j; \\[3mm]
\displaystyle (\tilde U')_{ij}&=&
\displaystyle (-1)^{j-i} \,(x-1)^{2i-j-1}\,\left( x\,p_{j-i}^{-j,0}(x) - p_{j-i}^{-j+2,-2}(x)\right), &i\le j.
\end{array}
\]
\end{theorem}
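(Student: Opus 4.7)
The final theorem is essentially the assembly of the two preceding theorems together with the structural identification carried out in items (1)--(5) of Section~\ref{subsec:L and U factors}. My plan is as follows.

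First, I would invoke the structural argument of items (1)--(5): namely, that $(\tilde M'_1 - \tilde M'_2)F$ is the matrix of the system $\Eq^{0}$ (for $H=0$) in the Pochhammer basis, and that Theorem~\ref{thm:Eqinfinity} together with the Recursion Formula~\ref{prop:recursion} shows that the periodic row operations encoded by $s^{k}(T_{0})$, with $T_{0}$ as in \eqref{eq:L0}, transform $\Eq^{0}$ into $\Eq^{\infty}$. Hence $T_{0}^{\text{L}}\,(\tilde M'_1-\tilde M'_2) = \tilde U'_{1}-\tilde U'_{2}$ is upper triangular, and by the uniqueness of LU-decompositions of infinite matrices (Proposition in Section~\ref{sec.MatrixRep}) the L-factor is $\tilde L' = (T_{0}^{\text{L}})^{-1}$ and the U-factor is $\tilde U' = \tilde U'_{1} - \tilde U'_{2}$.

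Second, the entry formula for $(\tilde L')_{ij}$ is obtained directly from Theorem~\ref{thm:L0}, without any additional computation. For the entries of $\tilde U'$, I would subtract entrywise the expressions given in the preceding theorem on $\tilde U'_{1}$ and $\tilde U'_{2}$, yielding
\[
(\tilde U')_{ij} = (-1)^{j-i}(x-1)^{2i-j-1}\!\left(x\,p_{j-i}^{-j,0}(x)-p_{j-i}^{-j+2,-2}(x)\right),\qquad i\le j,
\]
as stated.

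The only remaining check is a routine verification that the matrix displayed in the statement is indeed $\tilde M'_{1} - \tilde M'_{2}$, which follows immediately from the explicit definitions of $\tilde M'_{1}$ and $\tilde M'_{2}$ given in Section~\ref{subsec:L and U factors}: one confirms that the $(i,j)$ entry of $\tilde M'_{1}-\tilde M'_{2}$ equals $x\binom{i+j-2}{j-1} - (-1)^{j-1}\binom{i}{j-1}$, which matches the displayed matrix. The genuinely hard work — establishing the Jacobi-polynomial identities $K' = T_{0}^{-1}s(K')$ and $K'' = s(K'')T_{0}$ for the L-factor, and the hypergeometric simplifications producing $p_{j-i}^{-j,0}$ and $p_{j-i}^{-j+2,-2}$ for the U-factor — has already been carried out in the two preceding theorems, so no new obstacle arises here.
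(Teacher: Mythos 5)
Your proposal matches the paper's own (implicit) proof: the paper introduces this theorem with ``We have therefore proved the following theorem'', i.e., it is exactly the assembly you describe of item (5) of \S\ref{subsec:L and U factors} (which identifies $\tilde L'=(T_0^{\mathrm{L}})^{-1}$ and $\tilde U'=\tilde U'_1-\tilde U'_2$ via the Recursion Formula and Theorem \ref{thm:Eqinfinity}), Theorem \ref{thm:L0} for the entries of $\tilde L'$, and the preceding theorem for the entries of $\tilde U'_1$ and $\tilde U'_2$. One small slip in your final aside: the $(i,j)$ entry of $\tilde M'_1-\tilde M'_2$ is $x^{i}\tbinom{i+j-2}{j-1}-(-1)^{j-1}\tbinom{i}{j-1}$, since the diagonal factor $D_x$ contributes $x^{i}$ in row $i$ rather than $x$; this does not affect the argument.
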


\begin{corollary}
The LU-decomposition of
\[
\tilde M_0=\left(
\begin{smallmatrix}
x   - 1   \;&\; x    + 1    \;&\; x    -  1   \;&\;  x   +   1   &\;\; \cdot  \\[3mm]
x^2 - 1   \;&\; 2x^2 + 2    \;&\; 4x^2 - 4    \;&\; 8x^2 +  8    &\;\; \cdot  \\[3mm]
x^3 - 1   \;&\; 3x^3 + 3    \;&\; 9x^3 - 9    \;&\;27x^3 + 27    &\;\; \cdot  \\[3mm]
.           & .         & .         & .
\end{smallmatrix}
\right)
\]
is
\[
\tilde M_0=\tilde L_0 \tilde U_0
\left(
\begin{smallmatrix}
1     &         &         &         &         &         \\
      & 1       &         &         &         &          \\
      &         & 2       &         &         &           \\
      &         &         &6        &         &           \\
      &         &         &         & 24      &             \\
      &         &         &         &         & \cdot
\end{smallmatrix}
\right)
{\tiny
\left(
\begin{array}{rrrrrr}
1       &-1      & 1    &-1    & 1      & \cdot      \\[1mm]
        & 1      &-3    & 7    &-15     & \cdot      \\[1mm]
        &        & 1    &-6    & 25     & \cdot      \\[1mm]
        &        &      & 1    &-10     & \cdot      \\[1mm]
        &        &      &      & 1      & \cdot      \\[1mm]
.       & .      & .    & .    & .      \\
\end{array}
\right)
}
\]
where
\[
\begin{array}{rclr}
\displaystyle (\tilde L_0)_{ij}&=&\displaystyle (-1)^{i-j}\;p_{i-j}^{-i,-i}(x), &i\ge j; \\[3mm]
\displaystyle (\tilde U_0)_{ij}&=&\displaystyle (-1)^{i-j}\frac{j}{i}\;(x-1)^{2i-j}\;p_{j-i}^{-j,-1}(x), &i\le j;\\[2mm]
\end{array}
\]
and the explicit numerical
matrices are respectively formed by the factorial and
(up to the minus signs) Stirling numbers of the second kind.
\end{corollary}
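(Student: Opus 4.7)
The plan is to deduce this corollary from the preceding theorem (which gives $\tilde M'_1 - \tilde M'_2 = \tilde L'\, \tilde U'$) by exploiting the change-of-basis from the monomial basis $\{t^j\}$ to the Pochhammer basis $\{(t)_j\}$ of $\mathcal{M}[t]$ developed in Section~\ref{subsec.MatrixRep}.

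I would first establish the universal matrix identity $M_0 = (M'_1 - M'_2)\, F\, D_{-1}\, s(S)^t\, D_{-1}$. Combining the three identities $M = M_0 P_{R_H}^t$, $M' = (M'_1 - M'_2)\, F\, (P'_{R_H})^t$ and $M = M'\, D_{-1}\, s(S)^t\, D_{-1}$ from Section~\ref{subsec.MatrixRep} gives an expression that a priori depends on $R_H$, but since neither $M_0$ nor $M'_1 - M'_2$ involves $H$ it must be $R_H$-independent; specializing to $H = 0$ (where $P_{R_H} = P'_{R_H} = I$) extracts the claim. Applying the substitution $L_E \mapsto x$, $R_E \mapsto 1$ and invoking the preceding theorem yields
\[
\tilde M_0 \;=\; \tilde L'\, \tilde U'\, F\, D_{-1}\, s(S)^t\, D_{-1}.
\]
Next, using the recursion $S = s(S)\, T_{0,S}$ from \S\ref{subsec.ralevant inf. matrices}\,(3) one obtains $(S^{-1} s(S))^t = (T_{0,S}^{-1})^t$, and the closed form $(T_{0,S}^{-1})_{ij} = (-1)^{i-j} (i-1)!/(j-1)!$ (a short induction from the tridiagonal structure of $T_{0,S}$) gives by direct computation
\[
F\, D_{-1}\, s(S)^t\, D_{-1} \;=\; \mathbf{1}\, F\, D_{-1}\, S^t\, D_{-1} \;=\; \mathbf{1}\, F\, J,
\]
where $\mathbf{1}$ denotes the upper-triangular matrix with every non-zero entry equal to $1$ and $J$ is the displayed signed-Stirling matrix. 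Hence $\tilde M_0 = \tilde L_0\, (\tilde U'\, \mathbf{1})\, F\, J$ with $\tilde L_0 = \tilde L'$ (the two formulas for the entries coincide).

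What remains is to identify $\tilde U'\, \mathbf{1}$ with the Jacobi-polynomial matrix $\tilde U_0$ of the statement, i.e.\ to prove the cumulative-sum identity
\[
(-1)^{i-j}\,\tfrac{j}{i}\,(x-1)^{2i-j}\, p_{j-i}^{-j,-1}(x) \;=\; \sum_{k=i}^{j} (-1)^{k-i}\,(x-1)^{2i-k-1}\bigl(x\, p_{k-i}^{-k,0}(x) \,-\, p_{k-i}^{-k+2,-2}(x)\bigr).
\]
This is the main technical obstacle. I would attack it by passing to the Gauss hypergeometric form of $p_n^{\alpha,\beta}$ and collapsing the right-hand side via a contiguous three-term relation so that the sum telescopes, in the same spirit as the manipulations used in the proof of Theorem~\ref{thm:L0}; alternatively one can differentiate in $j$ (i.e.\ subtract consecutive instances) to reduce to a single-term identity among ${}_2F_1$'s.
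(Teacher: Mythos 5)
Your proposal is correct and follows essentially the same route as the paper: start from $\tilde M_0=(\tilde M'_1-\tilde M'_2)FD_{-1}s(S)^tD_{-1}=\tilde L'\tilde U'FD_{-1}s(S)^tD_{-1}$, identify $\tilde L_0=\tilde L'$, and reduce everything to comparing $\tilde U_0$ with $\tilde U'$; your identity $\tilde U_0=\tilde U'\,\mathbf{1}$ is exactly the paper's $\tilde U_0 B=\tilde U'$ with $B=\mathbf{1}^{-1}$ the upper bidiagonal matrix. Your second suggested strategy (subtracting consecutive instances in $j$) collapses the cumulative sum to precisely the single-term contiguous relation $2\big(\tfrac{j}{i}P_{j-i}^{-j,-1}+\tfrac{j-1}{i}P_{j-i-1}^{-j+1,-1}\big)=(x+1)P_{j-i}^{-j,0}-(x-1)P_{j-i}^{-j+2,-2}$ that the paper disposes of via Szeg\H{o}'s formulas (4.5.1) and (4.5.4).
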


\begin{proof}
We know that
\begin{align*}
\tilde M_0  &=(\tilde M'_1-\tilde M'_2)FD_{-1}s(S)^tD_{-1} \\
            &=\tilde L'\tilde U'FD_{-1}s(S)^tD_{-1}.
\end{align*}
This implies that $\tilde L'$ is the L-part of $\tilde M_0$ and thus we obtain
the expression for  $(\tilde L_0)_{ij}$.
Now we must prove that
\[
\tilde U_0 FD_{-1}S^tD_{-1}=\tilde U'FD_{-1}s(S)^tD_{-1}
\]
which is easily seen to be equivalent to prove that
\[
\tilde U_0
{\tiny
\left(
\begin{array}{rrrrrr}
1       &-1      &      &  \cdot      \\[1mm]
        & 1      &-1    &  \cdot      \\[1mm]
        &        & 1    &  \cdot      \\[1mm]
.       & .      & .    &        \\
\end{array}
\right)
}
=\tilde U'
\]
that is
\[
(x-1)\left(
\frac{j}{i}\;\;p_{j-i}^{-j,-1}(x)+
\frac{j-1}{i}\;(x-1)\;p_{j-i-1}^{-j+1,-1}(x)
\right)
=x\,p_{j-i}^{-j,0}(x) - p_{j-i}^{-j+2,-2}(x)
\]
or
\[
2\left(
\frac{j}{i}\;\;P_{j-i}^{-j,-1}(x)+
\frac{j-1}{i}\;\;P_{j-i-1}^{-j+1,-1}(x)
\right)
=(x+1)\,P_{j-i}^{-j,0}(x) - (x-1)P_{j-i}^{-j+2,-2}(x)
\]
It is now straightforward to derive this identity by using
the formulas (4.5.1) and (4.5.4) from \cite{Sz}.
\end{proof}

%------------------------------------------------------------
\section{Appendix: Jacobi and Gegenbauer polynomials}\label{sec:Jacobi}
%------------------------------------------------------------
The \emph{Jacobi} polynomials $P^{\alpha,\beta}_n$ are defined for
non negative integers $n$ and arbitrary (rational) numbers $\alpha$ and
$\beta$ as follows (see Chapter IV in \cite{Sz}),
\begin{multline*}
P^{\alpha,\beta}_n(x)=
\frac1{n!}\sum_{k=0}^n
\binom{n}{k}(n+\alpha+\beta+1)\dots(n+\alpha+\beta+k) \\[-2mm]
\times(\alpha+k+1)\dots(\alpha+n)\left(\frac{x-1}{2}\right)^k,
\end{multline*}
with the understanding that the general coefficient
\[\binom{n}{k}(n+\alpha+\beta+1)\dots(n+\alpha+\beta+k)
(\alpha+k+1)\dots(\alpha+n)\] is equal to
$(\alpha+1)\dots(\alpha+n)$ if $k=0$, and equal to
$(n+\alpha+\beta+1)\dots(2n+\alpha+\beta)$ if $k=n$. The Jacobi
polynomials can also be represented as
\[
P^{\alpha,\beta}_n(x)= \frac{(\alpha+1)\dots(n+\alpha)}{n!}\;
{}_2F_1\left(-n,n+\alpha+\beta+1;\alpha+1;\frac{1-x}{2}\right),
\]
where ${}_2F_1$ is the hypergeometric function of Gauss. If
$\alpha=\beta$ the normalized Jacobi polynomials
\[
\frac{\Gamma(\alpha+1)}{\Gamma(2\alpha+1)}
\frac{\Gamma(n+2\alpha+1)}{\Gamma(n+\alpha+1)}\;P^{\alpha,\alpha}_n(x)
\]
are called \emph{ultraspherical} polynomials or \emph{Gegenbauer}'s
polynomials. In what follows we shall call \emph{ultraspherical
Jacobi polynomials} to the ``unnormalized'' Gegenbauer's polynomials
$P^{\alpha,\alpha}_n(x).$

Let us consider
\[
p_n^{\alpha,\beta}(x)=(x-1)^nP^{\alpha,\beta}_n\left(\tfrac{x+1}{x-1}\right),
\]
it is clear that $p_n^{\alpha,\beta}(x)$ is again a polynomial.
These polynomials can be expressed in terms of the hypergeometric
function of Gauss as follows (see (4.22.1) in \cite{Sz}),
\[
p_n^{\alpha,\beta}(x)=
\frac{(n+\alpha+\beta+1)\dots(2n+\alpha+\beta)}{n!}\;
{}_2F_1\left(-n,-n-\alpha;-2n-\alpha-\beta;x-1\right).
\]

\subsection{Polynomial identities}
It is well known that
\begin{equation}\label{id:2}
P_n^{\alpha,\beta}(x)=(-1)^nP_n^{\beta,\alpha}(-x).
\end{equation}
In particular, the ultraspherical Jacobi polynomials are even or odd
according as $n$ is even or odd.
The ultraspherical Jacobi polynomials also satisfy
the following identities (see (4.7.14) and (4.7.28) in \cite{Sz}),
\begin{align}\label{id:3.1}
(n+2\alpha)P_n^{\alpha,\alpha}(x)
&=2(n+\alpha)P_n^{\alpha-1,\alpha-1}(x)+x(n+\alpha)P_{n-1}^{\alpha,\alpha}(x) \\
\label{id:3.2}
x(n+2\alpha)P_{n}^{\alpha,\alpha}(x)
&=2(n+1)P_{n+1}^{\alpha-1,\alpha-1}(x)+(n+\alpha)P_{n-1}^{\alpha,\alpha}(x).
\end{align}
The difference between these two identities gives,
\begin{multline}\label{id:1}
(1-x)(n+2\alpha)P_{n}^{\alpha,\alpha}(x) \\
=-2(n+1)P_{n+1}^{\alpha-1,\alpha-1}(x)+2(n+\alpha)P_n^{\alpha-1,\alpha-1}(x)
-(1-x)(n+\alpha)P_{n-1}^{\alpha,\alpha}(x).
\end{multline}
On the other hand, multiplying by $x$ equation \eqref{id:3.1} and
subtracting from it equation \eqref{id:3.2} yields the following
identity,
\begin{equation}\label{id:4}
2(n+1)P_{n+1}^{\alpha-1,\alpha-1}(x)
=2x(n+\alpha)P_{n}^{\alpha-1,\alpha-1}(x)+(x^2-1)(n+\alpha)P_{n-1}^{\alpha,\alpha}(x).
\end{equation}

\begin{lemma}\label{lemma:ALInv}
The ultraspherical Jacobi polynomials satisfy,
\begin{align*}
2\sum_{k=0}^n\left(\tfrac{1-x}{2}\right)^{n-k}P_{k}^{\alpha+n-k,\alpha+n-k}(x)
&=\frac{n+2\alpha}{n+\alpha}\;P_{n}^{\alpha,\alpha}(x)+P_{n-1}^{\alpha,\alpha}(x) \\
2\sum_{k=0}^n\left(\tfrac{-1-x}{2}\right)^{n-k}P_{k}^{\alpha+n-k,\alpha+n-k}(x)
&=\frac{n+2\alpha}{n+\alpha}\;P_{n}^{\alpha,\alpha}(x)-P_{n-1}^{\alpha,\alpha}(x)
\end{align*}
\end{lemma}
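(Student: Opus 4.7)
The plan is to prove the first identity by induction on $n$, and then derive the second identity from the first via the change of variable $x\mapsto -x$.

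First I would dispose of the second identity using symmetry. By identity \eqref{id:2} the ultraspherical polynomials satisfy $P_n^{\alpha,\alpha}(-x)=(-1)^nP_n^{\alpha,\alpha}(x)$. If one substitutes $x\mapsto -x$ in the first identity, the summand $\left(\tfrac{1-x}{2}\right)^{n-k}P_{k}^{\alpha+n-k,\alpha+n-k}(x)$ becomes $\left(\tfrac{1+x}{2}\right)^{n-k}(-1)^k P_k^{\alpha+n-k,\alpha+n-k}(x)$, and multiplying the resulting equation by $(-1)^n$ produces exactly the second identity (each factor of $(-1)^{n-k}$ combines with $\left(\tfrac{1+x}{2}\right)^{n-k}$ to yield $\left(\tfrac{-1-x}{2}\right)^{n-k}$, while the RHS acquires the expected minus sign in front of $P_{n-1}^{\alpha,\alpha}(x)$).

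For the first identity, denote its LHS by $A_n(\alpha,x)$. The base case $n=0$ is trivial with the convention $P_{-1}^{\alpha,\alpha}=0$. For the inductive step, the key observation is a self-similarity of $A_n$: splitting off the $k=n$ term and reindexing, one sees
\[
A_n(\alpha,x)=2P_n^{\alpha,\alpha}(x)+\tfrac{1-x}{2}\,A_{n-1}(\alpha+1,x),
\]
since the remaining summands in $A_n(\alpha,x)$ are precisely $\tfrac{1-x}{2}$ times the summands of $A_{n-1}$ with $\alpha$ replaced by $\alpha+1$. Applying the inductive hypothesis to $A_{n-1}(\alpha+1,x)$ and comparing with the desired RHS $\frac{n+2\alpha}{n+\alpha}P_n^{\alpha,\alpha}(x)+P_{n-1}^{\alpha,\alpha}(x)$, the claim reduces (after clearing the common factor $2(n+\alpha)$) to the polynomial identity
\[
2nP_n^{\alpha,\alpha}(x)+(1-x)(n+2\alpha+1)P_{n-1}^{\alpha+1,\alpha+1}(x)+(1-x)(n+\alpha)P_{n-2}^{\alpha+1,\alpha+1}(x)=2(n+\alpha)P_{n-1}^{\alpha,\alpha}(x).
\]

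This last identity is exactly equation \eqref{id:1} applied with the substitutions $n\mapsto n-1$ and $\alpha\mapsto\alpha+1$, so the induction closes. The only potential obstacle is the bookkeeping in the inductive step, in particular checking that the self-similarity $A_n(\alpha,x)=2P_n^{\alpha,\alpha}(x)+\tfrac{1-x}{2}A_{n-1}(\alpha+1,x)$ is correctly indexed and that the algebraic reduction lands on \eqref{id:1} with precisely the right parameters; once that is set up, the rest is immediate.
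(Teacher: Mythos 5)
Your proposal is correct and follows essentially the same route as the paper: the second identity is deduced from the first via the parity relation \eqref{id:2}, and the first is proved by induction using exactly the self-similarity $A_n(\alpha,x)=2P_n^{\alpha,\alpha}(x)+\tfrac{1-x}{2}A_{n-1}(\alpha+1,x)$ together with identity \eqref{id:1} (your substitution $n\mapsto n-1$, $\alpha\mapsto\alpha+1$ is the same use of \eqref{id:1} as the paper's, differing only because you step from $n-1$ to $n$ rather than from $n$ to $n+1$). The reduction you state is the correct one and closes the induction as claimed.
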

\begin{proof}
The second identity follows immediately from the first one by using
\eqref{id:2}, then we shall prove the first identity by induction on
$n$. It is clear that this identity holds for $n=0$ and all
$\alpha$, hence if we assume that it holds for $n$ and all $\alpha$
we have,
\begin{align*}
2\sum_{k=0}^{n+1}&\left(\tfrac{1-x}{2}\right)^{n+1-k}P_{k}^{\alpha+n+1-k,\alpha+n+1-k}(x) \\[1mm]
&=
\frac{1-x}{2}\,2
\sum_{k=0}^{n}\left(\tfrac{1-x}{2}\right)^{n-k}P_{k}^{\alpha+n+1-k,\alpha+n+1-k}(x)
+2P_{n+1}^{\alpha,\alpha}(x)  \\[2mm]
&=
\frac{1-x}{2}\;
\frac{n+2\alpha+2}{n+\alpha+1}\;P_{n}^{\alpha+1,\alpha+1}(x)
+\frac{1-x}{2}\;P_{n-1}^{\alpha+1,\alpha+1}(x)
+2P_{n+1}^{\alpha,\alpha}(x) \\[3mm]
&=
-\frac{n+1}{n+\alpha+1}\;P_{n+1}^{\alpha,\alpha}(x)+P_n^{\alpha,\alpha}(x)
+2P_{n+1}^{\alpha,\alpha}(x)  \\[3mm]
&=
\frac{n+1+2\alpha}{n+1+\alpha}\;P_{n+1}^{\alpha,\alpha}(x)+P_n^{\alpha,\alpha}(x),
\end{align*}
which completes the proof of the first identity. We point out that
in the last but one equality we used the identity \eqref{id:1} with
$\alpha+1$ instead of $\alpha$.
\end{proof}

%==================================================

\end{document}